\let\csname equation*\endcsname\relax
\let\csname endequation*\endcsname\relax
\def\softd{{\leavevmode\setbox1=\hbox{d}%
\hbox to 1.05\wd1{d\kern-0.4ex{\char039}\hss}}}%cstocs
\def\softt{{\leavevmode\setbox1=\hbox{t}%
\hbox to \wd1{t\kern-0.6ex{\char039}\hss}}}%cstocs
\newcommand{\ee}{\mathrm{e}}
\newcommand{\R}{\mathbb{R}}
\newtheorem{theorem}{Theorem}[section]
\newtheorem{lemma}[theorem]{Lemma}
\newtheorem{corollary}[theorem]{Corrolary}
\newtheorem{remark}[theorem]{Remark}
\newtheorem{remarks}[theorem]{Remarks}
\begin{document}

\title[Magnetic transport in laterally coupled layers]
{Magnetic transport in laterally coupled layers}

\author{Pavel Exner$^{1,2}$}
\address{1) Nuclear Physics Institute, Czech Academy of Sciences,
Hlavn\'{i} 130, \\ 25068 \v{R}e\v{z} near Prague, Czech Republic}
\address{2) Doppler Institute, Czech Technical University, B\v{r}ehov\'{a} 7, 11519 Prague, \\ Czech Republic}
\ead{exner@ujf.cas.cz \\[-.3em] \phantom{...}}

\hspace{3.7em} \emph{\small To Igor Jex on the occasion of his 60th birthday}

\begin{abstract}
We discuss magnetic transport in the system of two adjacent hard-wall layers exposed to a homogeneous field perpendicular to the layer plane and coupled laterally through a strip-shaped window in the common boundary. We show that the spectrum is a combination of absolutely continuous and flat bands, the latter being present only if the widths of the two layers are commensurate, and derive their properties. We also analyze the one-sided geometry in which the barrier separating the two layers is a halfplane.
\end{abstract}

% Uncomment for PACS numbers
\pacs{03.65-w}

% Uncomment for keywords
\vspace{2pc} \noindent{\it Keywords}: Dirichlet layers, lateral coupling, magnetic transport

%
% Uncomment for Submitted to journal title message
\submitto{\PS}

%%%%%%%%%%%%%%%%%%%%%%
\section{Introduction}
\label{s:intro}
\setcounter{equation}{0}
\setcounter{figure}{0}

Consider a particle whose motion is restricted to a plane or a layer. If the particle is charged and exposed to a homogeneous magnetic field perpendicular to such a configuration space region, its motion becomes localized, both classically and quantum mechanically. However, adding an obstacle we can make the particle to move along it and produce an electric current which even survives a weak disorder. This effect played an important role in explaining the quantum Hall effect following the seminal paper of Halperin \cite{Ha82}. The obstacle can be of various types, for instance, a potential barrier \cite{MMP99}, a wall with a boundary condition, Dirichlet \cite{DP99}, Neumann \cite{RS22}, or a mixed one; for more information we refer to the monograph~\cite{Ray}. Alternatively, an obstacle may be created by a local variation of the magnetic field as observed by Iwatsuka~\cite{Iw85}, see also \cite{MP97} or \cite[Sec.~6.5]{CFKS}.

And there are still other possibilities. It was shown recently \cite{EKT18} that the effect may also come from a geometric perturbation: edge currents arise if instead of a planar confinement the particle is kept within an appropriately bent layer. In the present paper we are going to describe another model in which the magnetic transport is due to the geometry of the system. A common feature with the previous example is that -- in contrast to the models mentioned above -- these geometrically induced edge currents \emph{have no classical counterpart} being thus of a purely quantum nature; it might not have been easy to see in \cite{EKT18} while here it will be quite obvious.

The model to discuss is simple. We consider two adjacent planar layers with hard-wall (Dirichlet) boundary in a homogeneous magnetic field perpendicular to them. The spectrum of such a system is pure point, the eigenvalues being sums of the Landau levels with the transverse mode energies. This changes when we couple the layers laterally by opening a window in the common boundary in the form of an infinite straight strip. Some of the previous eigenvalues may survive provided the widths of the two layers are commensurate but a infinite number of them spreads into absolutely continuous spectral bands representing the edge states describing transport along the window borders.

Let us describe briefly the contents of the paper. As a preparatory step we analyze in Sec.~\ref{s:neumann} a model example of a single layer with the Dirichlet condition replaced by Neumann in a strip at one of the boundaries and derive properties of its spectrum in dependence on the model parameters. This is first used in Sec.~\ref{s:latcoup} to describe a mirror-symmetric double layer, then we show there how the situation changes when the widths of two layers are different. Sec.~\ref{s:onesided} is devoted to the discussion of the one-sided geometry in which the boundary separating the two layers is removed in a whole halfplane and edge currents follow a single barrier border. Concluding remarks in Sec.~\ref{s:concl} present a few open questions about the model.

%%%%%%%%%%%%%%%%%%%%%%%%%%%%%%%%%%%%%%%%%%%
\section{A layer with an infinite Neumann window}
\label{s:neumann}
\setcounter{equation}{0}

To begin with, let us consider magnetic Laplacian in a planar layer $\Omega$ of width $d$ with the boundary which mixes Dirichlet and Neumann condition: it is Neumann at one of the boundaries in a straight strip of width $2a$ and Dirichlet in all the rest; the magnetic field is according to the assumption homogeneous and perpendicular to the layer, cf. Fig.~1. A simple symmetry consideration tells us that the indicated situation is equivalent to the (nontrivial part of the) spectral problem for a pair of adjacent layers of the same widths coupled laterally by an infinite strip-like `window'.

 %----------------%
\begin{figure}
\begin{tikzpicture}
\hspace{3cm}
\draw[line width=1.5pt, fill=gray!45] (1,1) -- (3,3) -- (5,3) -- (3,1) -- cycle;
\draw[line width=1.5pt, fill=gray!15] (3,1) -- (5,3) -- (6,3) -- (4,1) -- cycle;
\draw[line width=1.5pt, fill=gray!45] (4,1) -- (6,3) -- (8,3) -- (6,1) -- cycle;
\draw[line width=1.5pt, fill=gray!45] (2,1) -- (1,0) -- (6,0) -- (8,2) -- (7,2) -- (6,1) -- cycle;
\draw[line width=.5pt, ->] (4.3,1.8) -- (4.3,4.2) node[left] {$z$};
\draw[line width=.5pt] (4.3,-.5) -- (4.3,0);
\draw[line width=.5pt] (4.3,.8) -- (4.3,1);
\draw[line width=.5pt, ->] (.8,.8) -- (7.7,.8) node[below right] {$x$};
\draw[line width=.5pt, ->] (6.5,3) -- (6.8,3.3) node[right] {$y$};
\draw[line width=.5pt] (3,-.5) -- (4.5,1);
\draw[line width=2pt, ->] (2,2.5) -- (2,4) node[below right] {${\vec B}$};
\draw (1.8,1.3) node{D};
\draw (3.8,1.3) node{N};
\draw (1.8,.3) node{D};
\end{tikzpicture}
\caption{Magnetic layer with a Neumann window}
\end{figure}
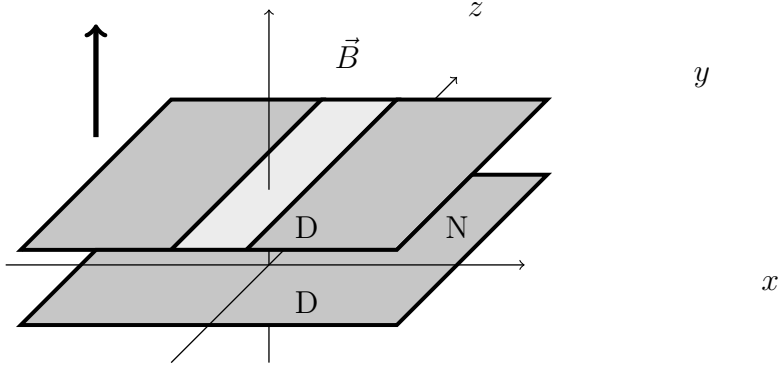
 %----------------%

To put the description in mathematical terms, the layer has the Cartesian product form, $\Omega:=\Sigma\times\mathbb{R}$, with the cross-section $\Sigma:=\mathbb{R}\times(0,d)$; the Neumann condition is imposed at $W:= \{\vec x=(x,y,d):\, x\in(-a,a),\, y\in\mathbb{R}\}$. The magnetic field is of the form $\vec B=(0,0,B)$; without loss of generality we may assume $B>0$ and choose the Landau gauge for the corresponding vector potential, $\vec A=(0,Bx,0)$. The Hamiltonian is then an operator on $L^2(\Omega)$ acting as
 %----------------%
\begin{subequations}
\label{Hamilt}
\begin{equation} \label{Hamilta}
H = -\partial_x^2 + (-i\partial_y+Bx)^2 -\partial_z^2
\end{equation}
 %----------------%
with the derivatives understood in the distributional sense and the domain
 %----------------%
\begin{equation} \label{Hamiltb}
D(H) = \{ \psi\in H^2(\Omega):\, H\psi\in L^2(\Omega),\,\psi(\vec x)=0 \;\;\text{if}\;\; x\in\partial\Omega\setminus W,\; \partial_z\psi(\vec x)=0 \;\;\text{if}\;\; x\in W\}
\end{equation}
\end{subequations}
 %----------------%
which is obviously a subset $H^2(\Omega)$; if we wish to stress the dependence on the parameters we denote the operator as $H_a$ or $H_{a,d}$. As usual in situations with a translational symmetry -- see, e.g., \cite[Sec.~6.5]{CFKS} or \cite{Iw85, EKT18} and references therein -- one can simplify the spectral analysis with the help of a partial Fourier transformation by which operator \eqref{Hamilt} is unitarily equivalent to the direct integral
 %----------------%
\begin{subequations}
\label{dirint}
\begin{equation} \label{dirinta}
H = \int^\oplus_\mathbb{R} H(p)\,\mathrm{d}p,
\end{equation}
 %----------------%
where the fiber operators act on $L^2(\Sigma)$ as
 %----------------%
\begin{equation} \label{dirintb}
H(p) = -\partial_x^2 + (p+Bx)^2 -\partial_z^2
\end{equation}
 %----------------%
with the domain
 %----------------%
\begin{align}
D(H(p)) = \{ & \psi:\: H(p)\psi\in L^2(\Sigma):\, \psi(x,0)=0,\;\, x\in\mathbb{R},\;\; \psi(x,d)=0\;\;\text{if}\;\; |x|\ge a \;\;\nonumber \\ & \&\;\; \partial_z\psi(x,d)=0\;\;\text{if}\;\; |x|<a, \;\; x\in\mathbb{R}\}, \label{dirintc}
\end{align}
\end{subequations}
 %----------------%
contained in $H^2(\Sigma)$ and independent of the momentum variable $p$.

Assume first that the window is absent, $a=0$. In that case variables in the fiber operator $H_0(p)$ separate and the spectrum is easily found; it consists of eigenvalues
 %----------------%
\begin{subequations}
\label{freespec}
\begin{equation} \label{freeev}
\lambda_{n,m} = B(2n+1) + \big(\textstyle{\frac{\pi m}{d}}\big)^2,\quad n\in\mathbb{N}_0,\; m\in\mathbb{N}
\end{equation}
 %----------------%
combining the Landau levels with the Dirichlet eigenvalues in the transverse direction. They are independent of the momentum $p$ and associated with the eigenfunctions
 %----------------%
\begin{equation} \label{freeef}
\phi_{n,m}(x,z)= \sqrt{\textstyle{\frac{2}{d}}}\, h_n\big(x+\textstyle{\frac{p}{B}}\big)\, \sin \textstyle{\frac{\pi mz}{d}},
\end{equation}
 %----------------%
where $h_n$ are oscillator eigenfunctions
 %----------------%
\begin{equation} \label{oscill}
h_n(u) = \frac{1}{\sqrt{2^n n!}}\, \big(\textstyle{\frac{B}{\pi}}\big)^{1/4} \ee^{-Bu^2/4}\, H_n\big(\sqrt{B}u\big).
\end{equation}
\end{subequations}
 %----------------%
The eigenvalues \eqref{freeev} are simple provided $\frac{Bd^2}{\pi^2} \not\in\mathbb{Q}$, in the opposite case they may have multiplicity two. We can arrange them in the ascending order into a sequence $\{\lambda_k:\,k\in\mathbb{N}\}$, with the multiplicity taken into account in the rational case; the corresponding indices will be written as $k(n,m)$ when necessary. Consequently, the eigenvalues of $H_0 = \int^\oplus_\mathbb{R} H_0(p)\,\mathrm{d}p$ are infinitely degenerate and given by \eqref{freeev} again.

Let us pass to the case of nontrivial coupling, $a>0$, and look how the presence of the window will change the spectrum \eqref{freespec}. Its purely discrete character will not be altered, so following general properties of direct integrals of operators \cite[Sec.~XIII.16]{RS} we have to find the eigenvalues $\lambda_k(p)$ of the fiber operator \eqref{dirintb} and their dependence on the momentum variable $p$; if we need to indicate the dependence on the parameters, we write $\lambda_k(p;a)$ or $\lambda_k(p;a,d)$, $\lambda_k(p;B)$, etc. We begin with several auxiliary results.
 %----------------%
\begin{lemma} \label{l:monoton}
For any $p,p'\in\R$ and $a,a'\ge 0$ we have $H_{a'}(p')\le H_a(p)$ in the form sense whenever $(p-a,p+a) \subset (p'-a',p'+a')$
\end{lemma}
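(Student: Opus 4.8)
The plan is to work entirely at the level of the associated quadratic forms and to reduce the two fibres to a single one-parameter family indexed by the Neumann window. The form of $H_a(p)$ is
\[
Q^p_a[\psi]=\int_\Sigma\bigl(|\partial_x\psi|^2+(p+Bx)^2|\psi|^2+|\partial_z\psi|^2\bigr)\,\mathrm{d}x\,\mathrm{d}z,
\]
whose form domain consists of the functions $\psi\in H^1(\Sigma)$ satisfying only the \emph{essential} boundary conditions $\psi(x,0)=0$ for all $x$ and $\psi(x,d)=0$ for $|x|\ge a$; the Neumann condition on the window is natural and is not imposed on the form domain. In particular the form domain depends on the pair $(p,a)$ through the window alone.

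First I would centre the oscillator. The translation $x\mapsto x+p/B$ is unitary on $L^2(\Sigma)$ and carries $Q^p_a$ to the form of the fixed differential expression $-\partial_u^2+B^2u^2-\partial_z^2$, while rigidly transporting the Neumann window to an interval $I_{p,a}$ of length $2a$ whose centre is fixed by $p$ (and $B$). After this reduction the whole $(p,a)$-dependence is encoded in the single interval $I_{p,a}$: denoting by $\hat H(I)$ the operator with the above expression and with Neumann imposed on $I\subset\R$ at $z=d$ and Dirichlet elsewhere, one has $H_a(p)\cong\hat H(I_{p,a})$, and the containment hypothesis of the lemma is precisely $I_{p,a}\subseteq I_{p',a'}$.

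The monotonicity is then immediate. Enlarging the window removes Dirichlet constraints, so $I\subseteq I'$ gives the form-domain inclusion $\mathrm{Dom}(\hat H(I))\subseteq\mathrm{Dom}(\hat H(I'))$; moreover the two forms are restrictions of one and the same expression, so they \emph{coincide} on the smaller domain. A larger form domain can only lower the min--max values, whence $\hat H(I')\le\hat H(I)$ in the form sense. Applying this to $I_{p,a}\subseteq I_{p',a'}$ yields $\hat H(I_{p',a'})\le\hat H(I_{p,a})$, i.e.\ the asserted inequality $H_{a'}(p')\le H_a(p)$.

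The step that needs care---and the only genuine subtlety---is the reduction in the second paragraph. The two fibres are centred by \emph{different} translations, so the inequality is naturally a statement about their centred representatives: compared literally on $L^2(\Sigma)$ with their original potentials, the difference $(p'+Bx)^2-(p+Bx)^2$ is linear in $x$ and changes sign, so no un-centred form inequality can hold for all admissible $\psi$. This is harmless here, since the reduction proceeds through unitaries and only the spectral (min--max) consequences of the inequality are used in the sequel, namely the monotone dependence of the band functions $\lambda_k(p;a)$ on the window. Finally one should record, for the form comparison to be meaningful, the standard characterisation of the form domain used above, i.e.\ that for this mixed Dirichlet--Neumann problem the Neumann condition is natural; this is where the containment $I_{p,a}\subseteq I_{p',a'}$ (equivalently $a\le a'$ together with the centre constraint) enters.
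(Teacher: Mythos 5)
Your proof is correct and follows essentially the same route as the paper: translate in $x$ to remove the $p$-dependence from the potential, so that both fibres become one and the same form with Neumann windows satisfying $W_{p}(a)\subset W_{p'}(a')$, and then invoke Dirichlet monotonicity (form-domain inclusion with the forms coinciding on the smaller domain, hence ordered min--max values). Your added remark that the inequality must be read through the unitary conjugation---since the un-centred potentials cannot be compared pointwise---is a genuine subtlety that the paper leaves implicit.
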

 %----------------%
\begin{proof}
The quadratic form associated with $H_a(p)$ is
 %----------------%
\begin{subequations}
\label{fibform}
 %----------------%
\begin{equation} \label{fibfora}
h_a[\psi;p] = \int_\Sigma \big( |\nabla\psi(x,z)|^2 +(p+Bx)^2 |\psi(x,z)|^2\big)\,\mathrm{d}x \mathrm{d}z
\end{equation}
 %----------------%
with the domain
 %----------------%
\begin{equation}
Q(H_a(p)) = \{ \psi\in H^1(\Sigma):\,h_a[\psi;p]<\infty\,, \; \psi(x,0)=0,\;\, x\in\partial\Sigma\setminus W_0(a)\}, \label{fibformb}
\end{equation}
\end{subequations}
 %----------------%
where $W_0(a):= \{\vec x=(x,d):\, x\in(-a,a)\}$. By a simple change of variables in \eqref{fibfora}, $h_a[\psi;p]$ is unitarily equivalent to the form acting as $h_a[\psi;0]$ on the domain of the type \eqref{fibformb} in which $W_0(a)$ is replaced $W_p(a):= \{\vec x=(x,d):\, x\in(p-a,p+a)\}$. The inclusion relation $W_p(a)\subset W_{p'}(a')$ then implies the result in analogy with Proposition~4 in \cite[Sec.~XIII.15]{RS}
\end{proof}
 %----------------%
\begin{corollary} \label{c:bounds}
Under the assumption of the Lemma~\ref{l:monoton} we have $\lambda_k(p;a) \ge \lambda_k(p';a')$, in particular
 %----------------%
\begin{equation} \label{upperb}
\lambda_k(\infty) \le \lambda_k(p;a) \le \lambda_k(0) \quad\text{for}\quad p\in\mathbb{R},\;k\in\mathbb{N},
\end{equation}
 %----------------%
where $\lambda_k(0) := \lambda_k = \lambda_{n,m}$ for $k=k(n,m)$, independently of $p$, and
 %----------------%
\begin{equation} \label{nowall}
\lambda_k(\infty) := B(2n+1) + \big(\textstyle{\frac{\pi m}{2d}}\big)^2,\quad n\in\mathbb{N}_0,\; m\in\mathbb{N}.
\end{equation}
 %----------------%
\end{corollary}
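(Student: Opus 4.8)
The plan is to read off the two-sided bound from the min--max (Courant--Fischer) characterisation of the ordered eigenvalues, with the form monotonicity of Lemma~\ref{l:monoton} supplying the comparison. First I would note that each fibre $H_a(p)$ has purely discrete spectrum: the term $(p+Bx)^2$ grows without bound as $|x|\to\infty$ while the $z$-interval is bounded, so the resolvent is compact and min--max applies, eigenvalues counted with multiplicity. Denoting by $\lambda_k(\cdot)$ the $k$-th min--max value of the associated form, the inequality $H_{a'}(p')\le H_a(p)$ of the Lemma passes term by term to $\lambda_k(p';a')\le\lambda_k(p;a)$, which is the claimed monotonicity $\lambda_k(p;a)\ge\lambda_k(p';a')$.

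The upper estimate in \eqref{upperb} is the degenerate instance $a'=0$ of this monotonicity. When the comparison window is absent its interval $(p'-0,p'+0)$ is empty, hence trivially contained in $(p-a,p+a)$ for all $p,a$; as the pure-Dirichlet fibre carries the $p$-independent spectrum \eqref{freespec}, this yields $\lambda_k(p;a)\le\lambda_k(0)=\lambda_{n,m}$ immediately.

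For the lower estimate I would \emph{not} simply let $a\to\infty$: the fully Neumann-topped fibre separates into Landau levels and the mixed transverse modes $\big(\tfrac{(2m-1)\pi}{2d}\big)^2$, which is not the set recorded in \eqref{nowall}. Instead I would reflect. Given $\psi\in Q(H_a(p))$, extend it evenly across $z=d$ to $\tilde\psi$ on the doubled cross-section $\mathbb{R}\times(0,2d)$; then $\tilde\psi(x,0)=0$ and $\tilde\psi(x,2d)=\psi(x,0)=0$, the even reflection keeps $\tilde\psi\in H^1$ even where the original top condition is Dirichlet (the resulting kink is harmless for a form in $H^1$), and both the form and the norm merely double, so the Rayleigh quotient is unchanged. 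Thus reflection carries $Q(H_a(p))$ into a subspace of the Dirichlet form domain on the width-$2d$ slab, whose operator separates into Landau levels and the Dirichlet transverse eigenvalues $\big(\tfrac{\pi m}{2d}\big)^2$. Minimising the Rayleigh quotient over this smaller family of trial subspaces can only raise each value, whence $\lambda_k(p;a)\ge\lambda_k(\infty)$ with $\lambda_k(\infty)$ given exactly by \eqref{nowall}.

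The routine parts---discreteness, the dictionary between form ordering and eigenvalue ordering, and the empty-window degeneration---present no difficulty. The crux, and the step I expect to need the most care, is the lower bound: one must resist equating $\lambda_k(\infty)$ with the true $a\to\infty$ fibre, whose transverse spectrum consists of the mixed modes, and instead invoke the even-reflection mechanism that produces the coarser but explicit Dirichlet-on-$(0,2d)$ bound \eqref{nowall}. The only delicate verification is that the reflected trial functions genuinely lie in the width-$2d$ Dirichlet form domain, so that the min--max comparison runs in the direction stated.
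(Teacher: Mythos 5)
Your proof is correct, and for the skeleton of the statement --- discreteness of the fibre spectrum, the passage from the form ordering of Lemma~\ref{l:monoton} to the ordering of min--max values, and the upper bound as the degenerate case $a'=0$ --- it coincides with the paper's one-line argument. Where you genuinely diverge is the lower bound. The paper simply takes the ``infinitely wide window'', i.e.\ the layer with Neumann condition on the whole face $z=d$, as comparison operator; as you correctly point out, its transverse spectrum is $\big(\frac{(2m-1)\pi}{2d}\big)^2$, only the odd-$m$ subset of the values displayed in \eqref{nowall}, so that comparison does not literally reproduce the printed formula (it yields a sharper bound, since removing values from an ordered set can only raise its $k$-th element). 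Your even-reflection embedding of $Q(H_a(p))$ into the $H^1_0$ form domain of the Dirichlet slab of width $2d$ is a clean way to land exactly on the set in \eqref{nowall} with its natural ordering: the reflection preserves Rayleigh quotients and dimensions, and since even reflection matches traces across $z=d$ no distributional derivative is created there, so the one step you flagged as delicate is indeed sound. It is worth noting that the two routes are two faces of the same argument: the image of your reflection map is precisely the subspace of functions even about $z=d$, on which the width-$2d$ Dirichlet operator restricts to the Dirichlet--Neumann layer, so one further step recovers the paper's sharper bound, while stopping where you do matches \eqref{nowall} verbatim; either way the inequality \eqref{upperb} follows.
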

 %----------------%
\begin{proof}
Since the operator \eqref{Hamilt} and its fibers in the decomposition \eqref{dirinta} are obviously below bounded, the claim follows from the form version of the minimax principle \cite[Thm.~XIII.2]{RS}. The lower bound \eqref{nowall} corresponds to the 'infinitely wide window', that is, a layer with one boundary Neumann and the other Dirichlet.
\end{proof}
 %----------------%

Next we are going to show that the upper bound in \eqref{upperb} is saturated asymptotically.
 %----------------%
\begin{lemma} \label{l:asympt}
$\lim_{|p|\to\infty}\lambda_k(p;a) = \lambda_k$ holds for any $k\in\mathbb{N}$.
\end{lemma}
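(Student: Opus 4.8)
The plan is to squeeze $\lambda_k(p;a)$ between two quantities that both converge to $\lambda_k$. The upper bound is already in hand: Corollary~\ref{c:bounds} gives $\lambda_k(p;a)\le\lambda_k(0)=\lambda_k$ for every $p$, so what remains is to show $\liminf_{|p|\to\infty}\lambda_k(p;a)\ge\lambda_k$. By the reflection $x\mapsto-x$, which maps the symmetric window onto itself and sends $p\mapsto-p$, it is enough to treat $p\to+\infty$.

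For the lower bound I would use Neumann bracketing. Imposing an additional Neumann condition on the two vertical segments $\{x=\pm a\}\times(0,d)$ enlarges the form domain of $H_a(p)$ without altering the form expression \eqref{fibfora}, so in the form sense $H_a(p)\ge H^{\mathrm{left}}(p)\oplus H^{\mathrm{mid}}(p)\oplus H^{\mathrm{right}}(p)$, the three summands acting on the columns $x<-a$, $|x|<a$ and $x>a$, respectively. Hence $\lambda_k(p;a)$ dominates the $k$-th eigenvalue of the decoupled operator. On the middle and right columns the magnetic potential satisfies $(p+Bx)^2\ge(p-Ba)^2$, which tends to $+\infty$; all eigenvalues of $H^{\mathrm{mid}}(p)$ and $H^{\mathrm{right}}(p)$ therefore diverge, and for $p$ large the $k$ lowest eigenvalues of the decoupled operator are exactly those of the window-free left column $H^{\mathrm{left}}(p)$.

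It then remains to analyse $H^{\mathrm{left}}(p)$, which carries no window and so separates variables: the transverse Dirichlet part contributes $(\pi m/d)^2$, while the longitudinal part is the one-dimensional oscillator $-\partial_x^2+(p+Bx)^2$ on $(-\infty,-a)$ with a Neumann endpoint. After the shift $u=x+p/B$ this is $-\partial_u^2+B^2u^2$ on $(-\infty,L)$ with Neumann condition at $L=p/B-a\to+\infty$, and I would show that its $n$-th eigenvalue $\mu_n(L)$ converges to the Landau level $B(2n+1)$. Combined with the transverse contribution this yields $\lambda_k(H^{\mathrm{left}}(p))\to\lambda_{n,m}=\lambda_k$, and the squeeze $\lambda_k(H^{\mathrm{left}}(p))\le\lambda_k(p;a)\le\lambda_k$ closes the argument. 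The bound $\mu_n(L)\le B(2n+1)$ is again pure Neumann bracketing: decoupling the full-line oscillator at $u=L$ makes it dominate $A_L\oplus A_L^{+}$, and the eigenvalues of the right piece on $(L,\infty)$ all exceed $B^2L^2$, so for large $L$ the lowest ones come from $A_L$ and stay below the corresponding Landau levels.

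The main obstacle is the matching lower bound $\mu_n(L)\ge B(2n+1)-o(1)$, since bracketing controls the Neumann truncation only from above. The mechanism is exponential localization: the low-lying oscillator eigenfunctions decay like $\ee^{-Bu^2/4}$, so sending the Neumann endpoint to infinity can lower the eigenvalues by only an exponentially small amount. I would make this quantitative through the ground-state substitution $\psi=\ee^{-Bu^2/2}f$, under which the shifted form minus $B\|\psi\|^2$ becomes
\[
\int_{-\infty}^{L}\ee^{-Bu^2}\,|f'|^2\,\mathrm{d}u \;-\; BL\,|\psi(L)|^2 ,
\]
a nonnegative bulk term plus a boundary term that an Agmon-type trace estimate bounds by an exponentially small multiple of $\|\psi\|^2$ on the relevant low-energy subspace (the higher modes being handled analogously, or collectively through norm-resolvent convergence of $A_L$ to $A$). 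This localization estimate, rather than any of the bracketing steps, is where the real work lies.
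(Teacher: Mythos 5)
Your argument is correct and follows essentially the same route as the paper: bracketing to decouple the window-free half-strip from the part containing the window (whose spectral threshold, bounded below by $(p-Ba)^2+(\pi/(2d))^2$, diverges), separation of variables in the window-free part, and the convergence of the half-line oscillator eigenvalues as the Neumann endpoint recedes to infinity. The only differences are cosmetic: you cut at both $x=\pm a$ and need only the Neumann bracket because the upper bound is already supplied by Corollary~\ref{c:bounds}, and you give a self-contained ground-state-substitution/Agmon argument for the final one-dimensional step where the paper simply invokes the Feynman--Hellmann formula and cites \cite{CHS02}.
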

 %----------------%
\begin{proof}
The simplest way to find the limit is to use bracketing \cite[Sec.~XIII.15]{RS}. To be specific, consider the limit $p\to\infty$, the other case is similar. We clearly have $H_\mathrm{N}(p) \le H(p) \le H_\mathrm{D}(p)$ where the estimating operators are obtained by modifying the domain \eqref{dirintc} by adding the Neumann or Dirichlet condition, respectively, at the segment $\{-a\}\times(0,d)$. The spectrum of each of the estimating operators is then the union of the spectra in the two parts of the strip $\Sigma$. It is clear that for a \emph{fixed} $k=k(n,m)$ and $p$ large enough the eigenvalues between which $\lambda_k(p)$ is squeezed refer to the part of $\Sigma$ in the \emph{left} halfplane, that is, the one without the window. Indeed, the spectral threshold in the other part is easily estimated from below by $(p-Ba)^2 +\big(\frac{\pi}{2d}\big)^2$ so that it exceeds $\lambda_k$ eventually as $p$ increases.

In this part of the operator, however, the variables separate and the transverse contribution to the eigenvalue, $\big(\frac{\pi m}{d}\big)^2$, is independent of $p$. Hence it is sufficient to check that the eigenvalues of the harmonic oscillator restricted to the interval $\big(-\infty,\frac{p}{B}-a\big)$ with the D/N condition at its endpoint both converge to $B(2n+1)$ as $p\to\infty$; this can be done, e.g., using Feynman-Hellmann formula \cite{CHS02}.
\end{proof}
 %----------------%

What is more, the upper bound in \eqref{upperb} is sharp away from the asymptotics.
 %----------------%
\begin{lemma} \label{l:sharp}
$\lambda_k(p;a) < \lambda_k$ holds for any $a>0$, $\,p\in\mathbb{R}$, and $k\in\mathbb{N}$.
\end{lemma}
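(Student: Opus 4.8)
The plan is to argue by contradiction, sharpening the non-strict upper bound already furnished by Corollary~\ref{c:bounds}. Fix $a>0$, $p\in\R$ and $k\in\mathbb{N}$, write $\lambda:=\lambda_k$, and suppose that $\lambda_k(p;a)=\lambda$. Since $H_a(p)$ has purely discrete spectrum, $\lambda$ is then an eigenvalue of $H_a(p)$, and I want to show that this is incompatible with the mixed boundary condition defining $D(H_a(p))$.

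The decisive step is to locate an eigenfunction of $H_a(p)$ at the level $\lambda$ which is simultaneously a \emph{Dirichlet} eigenfunction. Recall that the two form domains satisfy $Q(H_0(p))\subset Q(H_a(p))$ while the forms coincide on the smaller domain, $h_a[\,\cdot\,;p]=h_0[\,\cdot\,;p]$ there --- this is exactly the mechanism producing the inequality $\lambda_k(p;a)\le\lambda$. If the $k$-th eigenvalues coincide, the minimax principle \cite[Thm.~XIII.2]{RS} forces the span $V$ of the first $k$ Dirichlet eigenfunctions to be an optimal subspace for $H_a(p)$, and the unit vector $\psi\in V$ realizing the internal maximum $\max_{\psi\in V}h_a[\psi;p]/\|\psi\|^2=\lambda$ must then be an eigenfunction, $H_a(p)\psi=\lambda\psi$. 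Being a nontrivial element of $V$, this $\psi$ is a Dirichlet eigenfunction; in particular $\psi(x,d)=0$ for \emph{all} $x\in\R$. On the other hand $\psi\in D(H_a(p))$, so it also obeys the window condition $\partial_z\psi(x,d)=0$ for $|x|<a$.

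It remains to see that these two requirements leave no room. By separation of variables a Dirichlet eigenfunction at energy $\lambda$ is a finite combination $\psi(x,z)=\sum c_{n,m}\,h_n(x+p/B)\,\sin\frac{\pi m z}{d}$ running over those pairs $(n,m)$ with $\lambda_{n,m}=\lambda$, cf.~\eqref{freeef}. Differentiating at the upper boundary gives $\partial_z\psi(x,d)=\sum c_{n,m}\,(-1)^m\frac{\pi m}{d}\,h_n(x+p/B)$; for a fixed Landau index $n$ the relation $\lambda_{n,m}=\lambda_{n,m'}$ forces $m=m'$, so at most one transverse mode contributes per $n$. Since the Hermite functions $h_n(\,\cdot\,+p/B)$ are real-analytic and linearly independent, the vanishing of $\partial_z\psi(\,\cdot\,,d)$ on the interval $(-a,a)$ compels every coefficient $c_{n,m}$ to vanish, i.e.\ $\psi\equiv0$, a contradiction. (Equivalently, $\psi$ would carry vanishing Cauchy data on the open window segment, whence $\psi\equiv0$ by unique continuation for the elliptic equation $-\Delta\psi+(p+Bx)^2\psi=\lambda\psi$.)

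The step needing the most care is the middle one --- extracting a genuine Dirichlet eigenfunction from the mere equality of the $k$-th eigenvalues --- because a priori the optimal subspace of $H_a(p)$ need not consist of eigenfunctions and the level $\lambda$ may be degenerate, as it is in the commensurate case. A clean way to circumvent this is a direct variational descent: perturb each Dirichlet eigenfunction $\phi$ at level $\lambda$ within $Q(H_a(p))$ by a function $\eta$ whose only nonzero boundary trace sits on the window, use the integration-by-parts identity $h_a[\phi,\eta;p]-\lambda\langle\phi,\eta\rangle=\int_{-a}^{a}\partial_z\phi(x,d)\,\eta(x,d)\,\mathrm{d}x$ to read off the first-order change of the Rayleigh quotient, and choose the traces so that the resulting matrix of first-order shifts on the eigenspace at $\lambda$ is $-2$ times a weighted Gram matrix of the boundary data $\{\partial_z\phi(\,\cdot\,,d)\}$ on $(-a,a)$. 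That Gram matrix is positive definite precisely by the linear independence established in the previous paragraph, so all eigenvalues at level $\lambda$ move strictly downward, yielding $\lambda_k(p;a)<\lambda$ directly and absorbing any multiplicity automatically.
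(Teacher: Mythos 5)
Your proof is correct in substance, but it follows a genuinely different route from the paper's. The paper argues directly: it perturbs the unperturbed eigenfunction $\psi_{n,m}$ by $\varepsilon\varphi$ with $\varphi$ supported inside the window region and not orthogonal to $\psi_{n,m}$, and reads off the strict decrease from the first-order term in \eqref{var-est2}. You instead argue by contradiction: equality $\lambda_k(p;a)=\lambda_k$ would force some nonzero element of the span of the first $k$ Dirichlet eigenfunctions to be an eigenfunction of $H_a(p)$, hence to carry vanishing Dirichlet \emph{and} Neumann data on the window, which the explicit separated form of the eigenfunctions (one transverse mode per Landau index at a fixed energy, linear independence of the real-analytic Hermite functions on $(-a,a)$) rules out. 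This is essentially the capacity/unique-continuation alternative the paper itself sketches in Remark~\ref{r:capacity}, made elementary by the explicit eigenfunctions; what it buys is that it avoids the delicate normalization bookkeeping of the trial-function computation (the first-order change of the energy in \eqref{var-est2} is matched by the first-order change of $\|\psi_\varepsilon\|^2$, so the Rayleigh quotient must be handled with care) and concentrates the whole difficulty in one clean nondegeneracy statement about the boundary trace. Two small refinements: in the minimax step the object you want is not literally ``the'' maximizer over the optimal subspace $V$ (which need not be unique or be an eigenfunction a priori), but the automatically existing nonzero $\psi\in V$ orthogonal to the first $k-1$ eigenfunctions of $H_a(p)$, for which saturation of $h_a[\psi]\ge\lambda_k(p;a)\|\psi\|^2$ forces $H_a(p)\psi=\lambda_k\psi$; and your closing ``Gram matrix'' paragraph remains a sketch, but it is not needed once that step is stated this way.
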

 %----------------%
\begin{proof}
Given $k=k(n,m)$, in view of the minimax principle it is sufficient to find a unit-norm function $\psi\in D(H(p)$ orthogonal to $\mathcal{H}_k^{(-)}$, the subspace spanned by the eigenfunctions \eqref{freeef} with indices $n',m'$ satisfying $k(n',m')<k(n,m)$, plus -- in case of $\lambda_k(p)$ of multiplicity two -- the `other' eigenfunction, such that
 %----------------%
\begin{equation} \label{var-est1}
(\psi,H(p)\psi) < \lambda_{n,m}
\end{equation}
 %----------------%
To this aim we choose a nonzero function $\varphi$ with the support, say, in $(-\frac12 a,\frac12 a)\times(\frac14 d,\frac34 d)$ which is orthogonal to the eigenfunctions spanning the subspace $\mathcal{H}_k^{(-)}$ and such that $\mathrm{Re}\,(\varphi, \psi_{n,m})\ne 0$. Note that there is enough room for choosing such a function, since restrictions of the spanning functions to the indicated rectangle form a finite-dimensional subspace in  $L^2(-\frac12 a,\frac12 a)\otimes L^2(\frac14,\frac34 d)$ the dimension of which is infinite. Then we put $\psi_\varepsilon := \psi_{n,m} + \varepsilon\varphi$ obtaining for the left-hand side of \eqref{var-est1} the expression
 %----------------%
\begin{equation} \label{var-est2}
(\psi_\varepsilon,H(p)\psi_\varepsilon) = \lambda_{n,m} \big(1+2\varepsilon\mathrm{Re}\,(\varphi,\psi_{n,m})\big) + \varepsilon^2\|H(p)^{1/2}\psi_{n,m}\|^2.
\end{equation}
 %----------------%
This gives the sought result because for small enough parameter $\varepsilon$ the linear term on the right-hand side of \eqref{var-est2} dominates over the quadratic one and choosing $\varepsilon$ of a proper sign we can satisfy the inequality \eqref{var-est1}.
\end{proof}
 %----------------%
\begin{remark} \label{r:capacity}
{\rm There is an alternative way to prove this inequality: one has to combine the sharp Dirichlet monotonicity result of \cite{GZ94} with the fact that the capacity of the window segment is positive \cite[Sec.~5.2]{Ra}. Moreover, this argument shows that the the function $\lambda_k(p;\cdot)$ is decreasing in $[0,\infty)$.}
\end{remark}

With these preliminaries, we can state and prove our first main result:
 %----------------%
\begin{theorem} \label{thm:nwindow}
Spectrum of the operator \eqref{Hamilt} is for any positive $a,\,d$, and $B$ purely absolutely continuous and has the band-and-gap structure.
\end{theorem}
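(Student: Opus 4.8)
The plan is to treat \eqref{Hamilt} as an analytically fibered operator and read off its spectral type from the behaviour of the fiber eigenvalue branches $p\mapsto\lambda_k(p)$ studied in the previous lemmas. The first task is to put the fibration on a firm analytic footing. Each fiber operator $H(p)$ has compact resolvent: the effective potential $(p+Bx)^2$ is confining in the unbounded variable $x$ while the transverse variable $z$ runs over the bounded interval $(0,d)$, so that the form domain $Q(H(p))$ embeds compactly into $L^2(\Sigma)$ and $\sigma(H(p))$ is purely discrete, accumulating only at $+\infty$. Moreover the forms \eqref{fibform} share a common domain independent of $p$ --- finiteness of $h_a[\psi;p]$ is controlled by the leading term $B^2x^2$ and is insensitive to $p$ --- and depend polynomially, hence analytically, on $p$. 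Thus $\{H(p)\}_{p\in\R}$ is a self-adjoint analytic family of type (B) in the sense of Kato.

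By Rellich's theorem the eigenvalues of such a family can be organized into functions $E_j:\R\to\R$, $j\in\mathbb{N}$, each real-analytic on the whole line, whose values at every $p$ are exactly the points of $\sigma(H(p))$ counted with multiplicity. Combining this with the direct-integral calculus \cite[Sec.~XIII.16]{RS}, the spectrum of $H$ equals $\overline{\bigcup_j E_j(\R)}$, and $H$ has purely absolutely continuous spectrum as soon as none of the band functions $E_j$ is constant; indeed a non-constant real-analytic $E_j$ can take any fixed value only on a set of measure zero, so it contributes no eigenvalue of $H$, whereas a constant $E_j\equiv c$ would produce an eigenvalue of infinite multiplicity. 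Everything therefore reduces to excluding flat bands.

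This exclusion is the crux of the argument, and it is where Lemmas~\ref{l:asympt} and~\ref{l:sharp} enter decisively. Suppose some band were flat, $E_{j}\equiv c$, so that $c\in\sigma(H(p))$ for every $p$. By Lemma~\ref{l:sharp} every eigenvalue satisfies the strict bound $\lambda_k(p)<\lambda_k$, while by Lemma~\ref{l:asympt} the ordered eigenvalues converge, $\lambda_k(p)\to\lambda_k$ as $|p|\to\infty$, the limits forming the discrete set $\{\lambda_k\}$ of free levels \eqref{freeev}. These two facts are incompatible with a flat band: on one hand $c$ must coincide with some free level $\lambda_{k_0}$, for otherwise the convergence together with the discreteness of $\{\lambda_k\}$ would keep $\sigma(H(p))$ uniformly away from $c$ for large $|p|$; on the other hand, near that level the eigenvalues originating from $\lambda_{k_0}$ stay strictly below it by Lemma~\ref{l:sharp}, while all others are bounded away from $\lambda_{k_0}$ for large $|p|$, so in fact $c\notin\sigma(H(p))$ eventually --- a contradiction. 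Hence no band is flat and the spectrum is purely absolutely continuous. The main obstacle I anticipate is precisely this matching of the strict gap from Lemma~\ref{l:sharp} against the asymptotic equalities of Lemma~\ref{l:asympt}: one has to argue carefully that a constant value cannot ``hide'' near a free level, exploiting both the strictness below $\lambda_{k_0}$ and the discreteness that isolates $\lambda_{k_0}$ from the neighbouring limits.

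Finally, the band-and-gap structure is read off from the shape of the ranges $E_j(\R)$, or equivalently of the continuous, even functions $\lambda_k(\cdot)$. By Corollary~\ref{c:bounds} each of these is squeezed between $\lambda_k(\infty)$ of \eqref{nowall} and $\lambda_k$, by Lemma~\ref{l:sharp} it stays strictly below $\lambda_k$, and by Lemma~\ref{l:asympt} it tends to $\lambda_k$ at infinity; its range is therefore a half-open interval $[\min_p\lambda_k(p),\lambda_k)$ whose closure is a compact band above $\lambda_k(\infty)$. The union of these bands is the spectrum of $H$, and consecutive bands may be separated by open gaps, which is the asserted band-and-gap structure.
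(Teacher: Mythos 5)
Your proposal is correct and follows essentially the same route as the paper: establish that $\{H(p)\}$ is a self-adjoint analytic family of type (B), deduce real-analyticity of the band functions, and rule out flat bands by playing the strict inequality of Lemma~\ref{l:sharp} against the asymptotic limits of Lemma~\ref{l:asympt}, concluding absolute continuity via the standard direct-integral criterion. Your case analysis for excluding a constant branch $E_j\equiv c$ is somewhat more detailed than the paper's one-line appeal to the two lemmas, but the underlying argument is the same.
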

 %----------------%
\begin{proof}
The band structure follows from the decomposition \eqref{dirinta}, and in addition, Lemma~\ref{l:sharp} shows that the Lebesgue measure of the spectrum is nonzero. To learn more about its character, we have to know more about the regularity of the functions $\lambda_k(\cdot)$. To this aim we express the quadratic form \eqref{fibform} in the vicinity of a point $p_0\in\R$ as the form sum
 %----------------%
\begin{equation} \label{pertform}
h_a[\psi;p] = h_a[\psi;p_0] + q[\psi],
\end{equation}
 %----------------%
where the perturbation can be estimated in the following way
 %----------------%
\begin{align}
q[\psi;p] &= (p-p_0)^2 \|\psi\|^2 + 2(\psi,(p-p_0)(p_0+Bx)\psi) \nonumber \\[.3em]
&\le (p-p_0)^2 (1+\delta^{-1}) \|\psi\|^2 + \delta \|(p_0+Bx)\psi\|^2 \label{formest} \\[.3em]
&\le (p-p_0)^2 (1+\delta^{-1}) \|\psi\|^2 + \delta h_a[\psi;p_0]. \nonumber
\end{align}
 %----------------%
Since $\delta$ can be any positive number, inequality \eqref{formest} means that $q[\cdot]$ is infinitesimally form bounded by $h_a[\cdot;p_0]$. Consequently, $h_a[\cdot;p]$ given by \eqref{pertform} is an analytic family of type (B) in the sense of Kato \cite[Sec.~7.4]{Ka}. This in turn implies that the eigenvalues $\lambda_k(p)$ are real analytic functions of the momentum variable, and as such they could be constant on an open subset of $\R$ only if they were constant, however, this possibility is excluded in view of Lemmata~\ref{l:asympt} and \ref{l:sharp}. The absolute continuity of the spectrum then follows from \cite[Thm.~XIII.86]{RS}.
\end{proof}
 %----------------%

The spectral character is not the only claim one can make about our Hamiltonian, its explicit form allows us to say more about it:
 %----------------%
\begin{theorem} \label{thm:windowprop}
In the described situation, spectrum of the operator \eqref{Hamilt} has the following properties:
 % ------------- %
 \begin{enumerate}[(i)]
 \setlength{\itemsep}{0pt}
\item The upper endpoint of each spectral band coincides with one of the eigenvalues \eqref{freeev} whose indices can be thus used to label the bands (and gaps). \label{propa}
\item For a fixed $p\in\R$ and fixed $a,d>0$, each eigenvalue $\lambda_k(p)$ depends continuously on the field intensity $B$ away from zero. \label{propb}
\item For a fixed $p\in\R$ and fixed $a,B>0$, each eigenvalue $\lambda_k(p)$ depends continuously on the layer width $d$ away from zero. \label{propc}
\item For a fixed $p\in\R$ and fixed $d,B>0$, each eigenvalue $\lambda_k(p,a)$ is continuous and decreasing as a function of the window width $a$ in $[0,\infty)$. \label{propd}
\item For fixed $d,B>0$ and indices $n,m$, there is a positive $a_\mathrm{o}$ such that the gap below the band indexed $(n,m)$ is open for $a<a_\mathrm{o}$. \label{prope}
\item The eigenfunctions corresponding to $\lambda_k(p,a)$ satisfy $\phi_k(x,z;p,a) = \phi_k(-x,z;-p,a)$, in particular, the probability current $p|\phi_k(x,z;p,a)|^2$ changes sign under the mirror transformation with respect to the $(y,z)$ plane. \label{propf}
 \end{enumerate}
 % ------------- %
\end{theorem}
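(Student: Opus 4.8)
The plan is to dispatch the six assertions one by one, leaning on the variational and perturbative tools already assembled. Assertion~\eqref{propa} is essentially a corollary of the preceding lemmata. For fixed $k=k(n,m)$ the corresponding band is the closure of the range of the continuous map $p\mapsto\lambda_k(p;a)$; Lemma~\ref{l:sharp} gives $\lambda_k(p;a)<\lambda_k$ for every $p$, while Lemma~\ref{l:asympt} gives $\lambda_k(p;a)\to\lambda_k$ as $|p|\to\infty$, so that $\sup_p\lambda_k(p;a)=\lambda_k$ without being attained and the band is $\big[\min_p\lambda_k(p;a),\,\lambda_k\big]$. Its upper endpoint is thus exactly the eigenvalue \eqref{freeev}, and since the distinct numbers $\lambda_{n,m}$ exhaust these endpoints they may be used to index the bands and the gaps between them.

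For \eqref{propb} and \eqref{propc} I would repeat the analytic-perturbation scheme of Theorem~\ref{thm:nwindow}, now treating $B$, respectively $d$, as the variable. In the form \eqref{fibfora} the field enters only through the polynomial $(p+Bx)^2$ on the $B$-independent domain \eqref{fibformb}, and the infinitesimal form-bound estimate \eqref{formest} applies verbatim, so $\{h_a[\,\cdot\,;p]\}$ is an analytic family of type (B) in $B$ and each $\lambda_k(p)$ is real-analytic, in particular continuous, in $B$ on $(0,\infty)$; the exclusion of $B=0$ is needed because the transverse confinement, hence the discreteness of the fiber spectrum, is lost there. For the width I would first transfer the $d$-dependence out of the domain by the rescaling $z=d\zeta$ accompanied by the obvious unitary, turning the fiber into $-\partial_x^2+(p+Bx)^2-d^{-2}\partial_\zeta^2$ on the fixed cross-section $\mathbb{R}\times(0,1)$ with $d$-independent boundary data; the coefficient $d^{-2}$ is analytic on $(0,\infty)$, and the same type-(B) reasoning yields continuity of $\lambda_k(p)$ in $d$ away from zero.

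Assertion~\eqref{propd} separates into monotonicity and continuity. Monotonicity is already implicit in Lemma~\ref{l:monoton}: for $a<a'$ and fixed $p$ the inclusion $(p-a,p+a)\subset(p-a',p+a')$ yields $H_{a'}(p)\le H_a(p)$, hence $\lambda_k(p;a')\le\lambda_k(p;a)$ by minimax, the decrease being strict by the capacity argument noted in Remark~\ref{r:capacity}. Continuity is the subtle point, because changing $a$ moves the Dirichlet/Neumann interface and so alters the form domain \eqref{fibformb} rather than the form itself; I would establish it by monotone convergence of the nested quadratic forms in the sense of Kato, the one-sided limits being guaranteed by the monotonicity, and the only real work is to verify that the limiting form coincides with $h_{a_0}$ rather than a proper restriction or extension of it. This I expect to be the principal obstacle of the theorem, and it is precisely here that the vanishing of the capacity of the symmetric difference of the window sets (cf.\ Remark~\ref{r:capacity}) is decisive, since it rules out any spurious boundary contribution in the limit. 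Granting \eqref{propd}, assertion~\eqref{prope} follows by showing that the band $(n,m)$ shrinks to the point $\lambda_{n,m}$ as $a\to0$ \emph{uniformly} in $p$: on a compact interval $|p|\le P$ the functions $\lambda_k(\cdot;a)$ are continuous, monotone in $a$, and converge to the constant $\lambda_k$ by \eqref{propd}, so Dini's theorem makes the convergence uniform, whereas for $|p|>P$ the bound $\lambda_k(p;a)\ge\lambda_k(p;a_0)>\lambda_k-\varepsilon$, valid for all $a\le a_0$ by monotonicity and Lemma~\ref{l:asympt}, takes over once $P$ is large. Hence $\sup_p\big(\lambda_k-\lambda_k(p;a)\big)\to0$, and since at $a=0$ there is a genuine gap between $\lambda_{n,m}$ and the nearest lower value \eqref{freeev}, that gap persists for $a<a_\mathrm{o}$ with a suitable $a_\mathrm{o}>0$.

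Assertion~\eqref{propf}, finally, is a reflection symmetry of the fibers. With $(U\psi)(x,z):=\psi(-x,z)$ one has $UH(p)U^{-1}=H(-p)$, because $(p-Bx)^2=(-p+Bx)^2$ and the window $|x|<a$ is invariant under $x\mapsto-x$, so the mixed boundary conditions in \eqref{dirintc} are preserved. This gives $\lambda_k(p)=\lambda_k(-p)$ and lets one choose the eigenfunctions so that $\phi_k(x,z;p,a)=\phi_k(-x,z;-p,a)$, the choice being made inside the eigenspace when the level is doubly degenerate. The current statement is then immediate: $|\phi_k(-x,z;-p,a)|^2=|\phi_k(x,z;p,a)|^2$ while the prefactor $p$ changes sign, so $p\,|\phi_k|^2$ is odd under the combined reflection, which is exactly the statement that the fibers $\pm p$ carry opposite edge currents.
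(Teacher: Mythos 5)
Your proposal is correct and for items \eqref{propa}, \eqref{propb}, \eqref{propc} and \eqref{propf} it runs along the same lines as the paper (Corollary~\ref{c:bounds} plus Lemmata~\ref{l:asympt} and \ref{l:sharp} for the band edges; type-(B) analytic perturbation in $B$; $z$-scaling for the width; the reflection $U H(p)U^{-1}=H(-p)$). One small inaccuracy there: the estimate \eqref{formest} does \emph{not} apply ``verbatim'' to the $B$-perturbation, since $(p+Bx)^2-(p+B_0x)^2$ contains an $x^2$ term and is therefore only relatively form-bounded with bound $<1$ for $|B-B_0|<B_0$, not infinitesimally bounded; this is exactly how the paper phrases it, and it still yields a type-(B) family, so the conclusion stands. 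Where you genuinely diverge is in \eqref{propd} and \eqref{prope}. For the continuity in $a$ the paper uses $x$-scaling to get real analyticity for $a>0$ and invokes the classical shrinking-Neumann-window results (Swanson, Maz'ya--Nazarov--Plamenevskii, Gadyl'shin) for continuity at $a=0$, whereas you propose monotone convergence of the nested forms with the limit form identified via a capacity argument; this is a viable alternative and has the advantage of treating $a_0>0$ and $a_0=0$ uniformly, but it leaves real work undone (identifying $\bigcap_{a>a_0}Q(h_a)$ and the closure of $\bigcup_{a<a_0}Q(h_a)$ with $Q(h_{a_0})$, and passing from strong resolvent convergence of monotone forms to convergence of the individual min-max values), comparable in weight to the paper's appeal to the literature. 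For \eqref{prope} the paper tracks the increasing sequence of band minima $\lambda_k(p_{a_n},a_n)$ and derives a contradiction with the pointwise limit $\lambda_k(p;a)\to\lambda_k$; your argument instead upgrades that pointwise limit to a \emph{uniform} one via Dini's theorem on a compact $p$-interval combined with Lemma~\ref{l:asympt} and the $a$-monotonicity outside it. This is arguably the cleaner route, since it makes explicit the uniformity in $p$ that the paper's contradiction argument implicitly relies on.
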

 %----------------%
\begin{proof}
Claim \eqref{propa} follows from Corollary~\ref{c:bounds}. As for \eqref{propb}, the continuity of $B\mapsto \lambda_k(p;B)$ in the vicinity of a fixed $B_0>0$ can be checked in a way similar to the proof of the previous theorem. To estimate form difference $h_a[\cdot;p,B]-h_a[\cdot;p,B_0]$ in terms of the last form, it is sufficient to bound $(p-Bx)^2-(p-B_0x)^2$ for all $x\in\R$ by $\delta(p-B_0x)^2$ with $\delta<1$ to be able to use analytical perturbation theory \cite{Ka}; this is clearly possible as long as $|B-B_0|<B_0$. Moreover, it shows that the function $B\mapsto \lambda_k(p;B)$ is in fact real analytic. In a similar way one can check the continuity in claim \eqref{propc} using scaling in the $z$ direction.

Likewise, the continuity (in fact, real analyticity) away from zero in claim~\eqref{propd} is proved using scaling in the $x$ direction, the strict monotonicity was mentioned in Remark~\ref{r:capacity}. To show the continuity at zero, we have to proceed differently. From Lemma~\ref{l:monoton} we know that the function $a\mapsto \lambda_k(p;a)$ is for any $k$ non-increasing which in combination with Corollary~\ref{c:bounds} means that $\lim_{a\to 0}\lambda_k(p;a) \le \lambda_k$ exists. To check that the last inequality is in fact equality, we recall that the behavior of eigenvalues in the situation when a Neumann window in a Dirichlet boundary shrinks to a point is a classical problem ~\cite{Sw63, MNP84, Ga92a}. The results in these and related papers are typically formulated for elliptic operators on bounded regions, however, the presence of the oscillatory potential in \eqref{dirintb} makes the spectrum discrete and allows one to modify the reasoning to the present situation.

To prove \eqref{prope}, let us fix $k=k(n,m)$. It follows from the continuity of $\lambda_k(\cdot,a)$ and Lemma~\ref{l:asympt} that to a given $a>0$ there is $p_a$ such that $\lambda_k(p_a,a) = \min_{p\in\R}\lambda_k(p,a)$. The indicated gap is open provided $\lambda_k(p_a,a)>\lambda_{k-1}$. If we take an $a_1<a$, the dispersion curve $\lambda_k(\cdot,a_1)$ has a minimum at $p_{a_1}$ and by Corolary~\ref{c:bounds} we have $\lambda_k(p_{a_1},a_1) > \lambda_k(p_a,a)$. In this way for any sequence $\{a_n\}$ such that $a_n\to 0$ as $n\to\infty$ we get an increasing sequence $\{\lambda_k(p_{a_n},a_n)\}$. The gap would close for any $a>0$ if $\lim_{n\to\infty} \lambda_k(p_{a_n},a_n) \le \lambda_{k-1}$, however, this would contradict the fact that $\lim_{a\to 0}\lambda_k(p;a)=\lambda_k$ for any $p\in\R$. The claim~\ref{propf} is obvious from \eqref{dirintb}.
\end{proof}

 %----------------%
\begin{remarks}
{\rm (a) We do not ask about the continuity of $\lambda_k(p;\cdot)$ at the $B=0$ because the spectral character changes in the limit of vanishing magnetic field. The variables in the operator \eqref{Hamilta} then separate. The transverse part in the $(x,z)$-plane has then the essential spectrum covering the interval$\big(\big(\frac{\pi}{d}\big)^2,\infty\big)$ and a finite number of positive discrete eigenvalues below the threshold \cite[Sec.~1.5.1]{EK}. Consequently, the full operator has an absolutely continuous spectrum in which the states with the energy support below $\big(\frac{\pi}{d}\big)^2$ can only propagate being localized in the vicinity of the window. \\[.2em]
(b) A naive limit $d\to 0$ makes no sense, of course. One may ask, however, what happens if we renormalize the energy by subtracting the divergent quantity $\big(\frac{\pi}{d}\big)^2$. It might then happen that, in the spirit of \cite{Gr08}, the limit would be for a fixed $p\in\R$ generically trivial, but the question is not simple and we leave it open. \\[.2em]
(c) Using the machinery mention in the proof of \eqref{propd}, in particular, the method of matching asymptotic expansions \cite{Ga92b} one can not only prove the continuity but also to find the asymptotic behavior of $\lambda_k(p,a)$ in the narrow-window regime, $a\to 0$. We postpone discussion of this question to a subsequent publication.
}
\end{remarks}

%%%%%%%%%%%%%%%%%%%%%%%%%%%%%%%%%%%%%%%%%%%
\section{Laterally coupled layers}
\label{s:latcoup}
\setcounter{equation}{0}

Let us pass now to the primary subject of this paper, spectral properties of the magnetic Laplacian describing a charged particle confined to a pair of parallel and adjacent planar layers $\Omega_j,\, j=1,2,$ of widths $d_1, d_2$, in general different, coupled laterally through a straight window in the form of an infinite strip of width $2a$. The magnetic field is again supposed to be homogeneous, perpendicular to the layers, and pointing upwards as sketched in Fig.~2.

 %----------------%
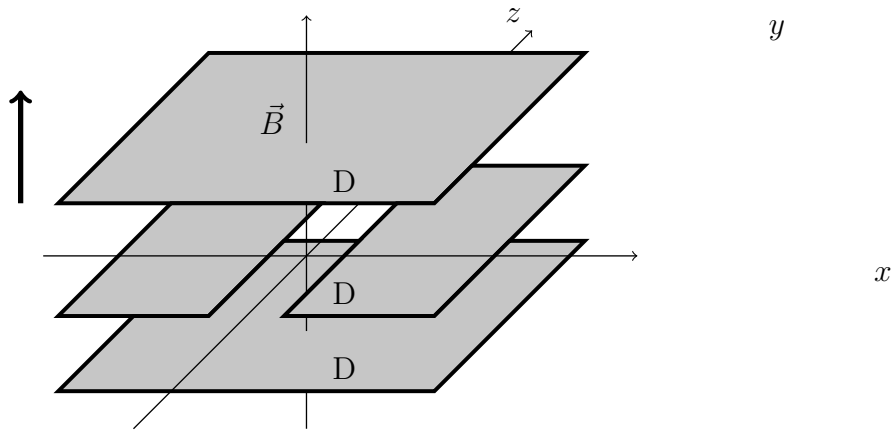
\begin{figure}
\begin{tikzpicture}
\hspace{3cm}
\draw[line width=1.5pt, fill=gray!45] (1,2.5) -- (3,4.5) -- (8,4.5) -- (6,2.5) -- cycle;
\draw[line width=1.5pt, fill=gray!45] (1,1) -- (2.5,2.5) -- (4.5,2.5) -- (3,1) -- cycle;
\draw[line width=1.5pt, fill=gray!45] (4,1) -- (5.5,2.5) -- (6,2.5) -- (6.5,3) -- (8,3) -- (6,1) -- cycle;
\draw[line width=1.5pt, fill=gray!45] (1,0) -- (2,1) -- (3,1) -- (4,2) -- (5,2) -- (4,1) -- (6,1) -- (7,2) -- (8,2) -- (6,0) -- cycle;
\draw[line width=.5pt, ->] (4.3,3.3) -- (4.3,5) node[left] {$z$};
\draw[line width=.5pt] (4.3,-.5) -- (4.3,0);
\draw[line width=.5pt] (4.3,.8) -- (4.3,1);
\draw[line width=.5pt] (4.3,1.3) -- (4.3,2.5);
\draw[line width=.5pt, ->] (.8,1.8) -- (8.7,1.8) node[below right] {$x$};
\draw[line width=.5pt, ->] (7,4.5) -- (7.3,4.8) node[right] {$y$};
\draw[line width=.5pt] (2,-.5) -- (5,2.5);
\draw[line width=2pt, ->] (.5,2.5) -- (.5,4) node[below right] {${\vec B}$};
\draw (1.8,1.3) node{D};
\draw (1.8,.3) node{D};
\draw (1.8,2.8) node{D};
\end{tikzpicture}
\caption{Laterally coupled magnetic layers}
\end{figure}
 %----------------%

To fix the notation, the double layer is of the form, $\Omega:=\Sigma\times\mathbb{R}$, with the cross-section $\Sigma:=\mathbb{R}\times(-d_2,d_1) \setminus \{(x,0):\, |x|\ge a\}$; the Dirichlet condition is imposed at the `outer' boundary, $z=-d_2, d_1$, and at the plane $z=0$ except the window, $W:= \{\vec x=(x,y,0):\, x\in(-a,a),\, y\in\mathbb{R}\}$. The magnetic field is of the form $\vec B=(0,0,B)$ with $B>0$; we again choose the Landau gauge for the corresponding vector potential, $\vec A=(0,Bx,0)$. The Hamiltonian is then an operator on $L^2(\Omega)$ acting as
 %----------------%
\begin{subequations}
\label{Hamilt2}
\begin{equation} \label{Hamilt2a}
H = -\partial_x^2 + (-i\partial_y+Bx)^2 -\partial_z^2
\end{equation}
 %----------------%
with the domain
 %----------------%
\begin{equation} \label{Hamilt2b}
D(H) = \{\psi\in H^2(\Omega) \cap H^1_0(\Omega):\: H\psi\in L^2(\Omega)\},;
\end{equation}
\end{subequations}
 %----------------%
we again may write $H_a$ or $H_{a,d}$ when we wish to stress the dependence on the parameters. By a partial Fourier transformation the operator \eqref{Hamilt2} is unitarily equivalent to the direct integral
 %----------------%
\begin{subequations}
\label{dirint2}
\begin{equation} \label{dirint2a}
H = \int^\oplus_\mathbb{R} H(p)\,\mathrm{d}p,
\end{equation}
 %----------------%
where the fiber operators act on $L^2(\Sigma)$ as
 %----------------%
\begin{equation} \label{dirint2b}
H(p) = -\partial_x^2 + (p+Bx)^2 -\partial_z^2
\end{equation}
 %----------------%
with the domain
 %----------------%
\begin{equation}
D(H(p)) = \{\psi\in H^2(\Sigma) \cap H^1_0(\Sigma):\: H(p)\psi\in L^2(\Sigma)\}, \label{dirint2c}
\end{equation}
\end{subequations}
 %----------------%
independent of the momentum variable $p$. If the layers are decoupled, $a=0$, the variables in the fiber operator $H_0(p)$ separate and the spectrum  consists of the eigenvalues
 %----------------%
\begin{subequations}
\label{freespec2}
\begin{equation} \label{freeev2}
\lambda_{n,m_1,m_2} = B(2n+1) + \big(\textstyle{\frac{\pi m_1}{d_1}}\big)^2 + \big(\textstyle{\frac{\pi m_2}{d_2}}\big)^2,\quad n\in\mathbb{N}_0,\; m_1,m_2\in\mathbb{N}
\end{equation}
 %----------------%
combining the Landau levels with pairs of Dirichlet eigenvalues in the transverse direction. They are independent of the momentum $p$ and associated with the eigenfunctions
 %----------------%
\begin{equation} \label{freeef2}
\phi_{n,m}(x,z) = \sqrt{\textstyle{\frac{2}{d}}}\, h_n\big(x+\textstyle{\frac{p}{B}}\big)\, \sin \textstyle{\frac{\pi mz}{d_j}},\quad (-1)^{j-1}z \in(0,d_j),\; j=1,2,
\end{equation}
\end{subequations}
 %----------------%
where $h_n$ are oscillator eigenfunctions \eqref{oscill}. The eigenvalues \eqref{freeev2} may be simple or degenerate of maximum multiplicity three depending on rational (in)dependencies between the numbers $B$ and $\big(\frac{\pi}{d_j}\big)^2$; we again arrange them in the ascending order into a sequence $\{\lambda_k:\,k\in\mathbb{N}\}$, with $k=k(n,m_1,m_2)$ and the multiplicity taken into account in case of degeneracies. Needles to say, the eigenvalues of $H_0 = \int^\oplus_\mathbb{R} H_0(p)\,\mathrm{d}p$ are all infinitely degenerate and given by \eqref{freeev2}.

Passing to coupled layers, let us begin with the symmetric situation where the widths are the same, $d_1=d_2=:d$. The mirror symmetry makes than the analysis easy: the operator \eqref{dirint2a} and its fibers \eqref{dirint2b} commute with the reflection changing the sign of $z$ on $\Omega$ and $\Sigma$, respectively, hence the operators are reduced by projections on the $z$-even and $z$-odd subspaces. Spectral properties of the former coincide with those discussed in the previous section while the latter refer to an unperturbed single layer. We thus arrive at the following conclusions:
 %----------------%
\begin{theorem} \label{thm:symmlayer}
In the described situation, spectrum of the operator \eqref{Hamilt2} consists of infinitely degenerate eigenvalues (= flat bands) \eqref{freeev} and absolutely continuous bands adjacent from below to them; both can be labeled as in Theorem~\ref{thm:nwindow} with an additional label indicating the even and odd parts. Furthermore, we have:
 % ------------- %
 \begin{enumerate}[(i)]
 \setlength{\itemsep}{0pt}
\item In the limit $a\to 0$ the spectrum shrinks to the family of flat bands \eqref{freeev}. \label{prop2a}
\item The eigenvalues $\lambda_k(p)$ determining the absolutely continuous bands depend continuously on the (positive values of the) parameters $p,\,B,\,a$, and $d$. \label{prop2b}
\item For fixed $d,B>0$ and fixed band index $k(n,m)$, the gap below the corresponding pair of bands is open for all $a$ small enough. \label{prop2c}
\item The eigenfunctions corresponding to $\lambda_k(p)$ satisfy $\phi_k(x,z;p) = \phi_k(-x,z;-p)$ so that the probability current $p|\phi_k(x,z;p)|^2$ changes sign under the mirror transformation with respect to the $x=0$ plane. \label{prop2d}
 \end{enumerate}
 % ------------- %
\end{theorem}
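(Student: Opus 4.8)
The plan is to exploit the mirror symmetry of the configuration and reduce the whole problem to the single-layer analysis of Section~\ref{s:neumann}. Let $R$ denote the reflection $(Rf)(x,z)=f(x,-z)$ on $L^2(\Sigma)$. Since $d_1=d_2=d$, the cross-section $\Sigma$ and the boundary-condition pattern are invariant under $R$, and the fiber operator \eqref{dirint2b} contains no term mixing $z$ with its sign, so $R$ commutes with $H(p)$. First I would verify that this commutation persists at the level of the operator domain \eqref{dirint2c} (equivalently, of the form domain), so that the unitary involution $R$ reduces $H(p)$ into its restrictions to the even and odd subspaces $L^2_\mathrm{e}(\Sigma)$ and $L^2_\mathrm{o}(\Sigma)$.

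The core of the argument is to identify these two restrictions. Restricting an even function to the upper half $z\in(0,d)$, evenness forces $\partial_z\psi(x,0)=0$ for all $x$, while the imposed Dirichlet condition survives as $\psi(x,0)=0$ for $|x|\ge a$ and as $\psi(x,d)=0$ at the outer wall; after the relabeling $z\mapsto d-z$ this is exactly the fiber operator $H_a(p)$ of \eqref{dirintb}--\eqref{dirintc} with width $d$. For an odd function one has $\psi(x,0)=0$ for every $x$, so the window becomes invisible and the restriction to $(0,d)$ is the \emph{unperturbed} Dirichlet layer, independent of $a$, whose spectrum is precisely $\{\lambda_{n,m}\}$ from \eqref{freeev}. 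Consequently the odd part contributes $p$-independent eigenvalues of $p$-independent multiplicity, which in the direct integral \eqref{dirint2a} yield infinitely degenerate eigenvalues, i.e.\ the flat bands \eqref{freeev}; the even part, by Theorem~\ref{thm:nwindow}, contributes purely absolutely continuous bands, and by Lemmata~\ref{l:sharp} and~\ref{l:asympt} these lie strictly below the values $\lambda_{n,m}$ while approaching them as $|p|\to\infty$, hence they are adjacent from below to the flat bands. This yields the main assertion together with the even/odd labelling.

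The itemized claims then follow by transcribing the corresponding single-layer statements. Claim~\eqref{prop2a} is immediate, since the odd (flat) part does not depend on $a$ and, by Theorem~\ref{thm:windowprop}\eqref{propd}, the even eigenvalues satisfy $\lambda_k(p;a)\to\lambda_k$ as $a\to0$, so the absolutely continuous bands collapse onto the flat bands. Claim~\eqref{prop2b} is the continuity of the even eigenvalues in $p,B,a,d$ established in Theorem~\ref{thm:windowprop}\eqref{propb}--\eqref{propd}, the continuity in $p$ being the real analyticity obtained in the proof of Theorem~\ref{thm:nwindow}. For claim~\eqref{prop2c} observe that the flat band of index $(n,m)$ sits at the top of the even band of the same index, so the gap below the pair coincides with the gap below the even band, which is open for all small $a$ by Theorem~\ref{thm:windowprop}\eqref{prope}. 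Finally, claim~\eqref{prop2d} follows from the invariance of $H(p)$ under the simultaneous maps $x\mapsto-x$ and $p\mapsto-p$, exactly as in Theorem~\ref{thm:windowprop}\eqref{propf}, the stated current transformation being then immediate from the oddness in $p$.

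The point demanding genuine care -- and the step I expect to be the main obstacle -- is the boundary bookkeeping at $z=0$ underlying the reduction: one must check that the even and odd projections carry the domain \eqref{dirint2c} onto the Neumann-window domain \eqref{dirintc} and the plain Dirichlet-layer domain respectively, including the $H^2$-regularity near the window edges $x=\pm a$, $z=0$, where a mixed Dirichlet--Neumann corner appears in the even sector. Once self-adjointness of each reduced piece is confirmed, the remainder is a direct import of the results of Section~\ref{s:neumann}.
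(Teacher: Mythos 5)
Your argument is exactly the one the paper uses (the paper states it informally in the paragraph preceding the theorem rather than in a proof environment): reduction by the $z$-reflection into even and odd subspaces, identification of the even part with the Neumann-window operator of Section~\ref{s:neumann} and of the odd part with the unperturbed Dirichlet layer, and then importing Theorems~\ref{thm:nwindow} and~\ref{thm:windowprop} for the itemized claims. Your version is correct and in fact supplies more detail (the domain/boundary bookkeeping at $z=0$) than the paper itself does.
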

 %----------------%

Claim \eqref{prop2d} reveals the characteristic feature of transport in coupled layers. In the non-magnetic analogue of our problem a transport exist too, since the variables describing the motion in $\Sigma$ and in the $y$ direction separate. The eigenfunctions referring to the discrete spectrum of the two-dimensional problem \cite[Sec.~1.5.1]{EK} combine with the free motion, $\ee^{\pm ipy}$, perpendicular to the $(x,z)$ plane. However, the said eigenfunctions (and generalized eigenfunctions) are symmetric or antisymmetric with respect to the line $x=0$, and as a consequence, the probability current associated with a nonzero $p$ is even with respect to the $(y,z)$ plane. In contrast, the magnetic transport exhibits a preferred direction: the mirror image of its profile corresponds to the motion in the opposite direction. If you wish this can be interpreted as a particular type of $\mathcal{PT}$-symmetry in which the space reflection and momentum reflection coming from time reversal are applied in two perpendicular directions.

We also stress that, in contrast to planar regions with Dirichlet boundary \cite[Sec.~7.2.1]{EK}, the magnetic transport in coupled layers we are discussing here has \emph{no meaningful classical analogue} as indicated in the introduction. The reason is that the set of initial conditions for which a charged particle can propagate by repeated reflections from the window edges has zero measure in the corresponding phase space.

Let us pass to the general case in which the widths $d_1$ and $d_2$ need not coincide.
 %----------------%
\begin{theorem} \label{thm:asymmlayer}
In this case the spectrum of \eqref{Hamilt2} has a band-and-gap structure with bands labeled by the indices appearing in \eqref{freeef2}, plus possibly an additional label specified in \eqref{prop22a} below, and the following properties:
 % ------------- %
 \begin{enumerate}[(i)]
 \setlength{\itemsep}{0pt}
\item The spectrum is absolutely continuous if $d_1$ and $d_2$ are incommensurate. In the opposite case there is an infinite number of flat bands corresponding to pairs $(m_1,m_2)$ satisfying $\frac{m_1}{m_2}=\frac{d_1}{d_2}$; to each of them there is an absolutely continuous band adjacent to it from below. \label{prop22a}
\item In the limit $a\to 0$ the spectrum shrinks to the family of flat bands \eqref{freeev2}. \label{prop22b}
\item The eigenvalues $\lambda_k(p)$ determining the absolutely continuous bands depend continuously on the (positive values of the) parameters $p,\,B,\,a$, and $d$. \label{prop22c}
\item For fixed $d,B>0$ and fixed band index $k(n,m_1,m_2)$, the gap below the corresponding band, or pair of bands, is open for all $a$ small enough. \label{prop22d}
\item The eigenfunctions corresponding to $\lambda_k(p)$ defining an absolutely continuous band satisfy $\phi_k(x,z;p) = \phi_k(-x,z;-p)$ so that the probability current $p|\phi_k(x,z;p)|^2$ changes sign under the mirror transformation with respect to the $x=0$ plane. \label{prop22e}
 \end{enumerate}
 % ------------- %
\end{theorem}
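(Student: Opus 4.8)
The plan is to follow the template of Theorems~\ref{thm:nwindow} and~\ref{thm:windowprop}, replacing the mirror symmetry that made Theorem~\ref{thm:symmlayer} easy by a direct analysis of the transverse modes at the interface $z=0$. First I would isolate the flat bands by an explicit construction. A branch $\lambda_k(p)$ is flat precisely when it carries an eigenfunction that does not feel the window, i.e. one of the product form $h_n(x+\frac{p}{B})\,f(z)$ in which $f$ solves $-f''=\nu f$ on $(-d_2,0)\cup(0,d_1)$ with $f(-d_2)=f(d_1)=0$ and is in addition smooth across $z=0$, so that it lies simultaneously in the domains of the open- and closed-window fibre operators. Taking $f=\sin\sqrt{\nu}\,z$ on both sides forces $\sqrt{\nu}\,d_1=\pi m_1$ and $\sqrt{\nu}\,d_2=\pi m_2$, hence $\frac{m_1}{m_2}=\frac{d_1}{d_2}$; such integers exist iff $d_1,d_2$ are commensurate, and then for every multiple of the primitive pair. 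These functions are exact eigenfunctions of $H(p)$ for all $p$ and all $a$, which produces the flat bands \eqref{freeev2} and yields the dichotomy of claim~\eqref{prop22a}.

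Next I would show that every remaining branch is non-constant and hence contributes absolutely continuous spectrum. The form identity \eqref{pertform}--\eqref{formest} carries over verbatim, so $h_a[\cdot;p]$ is again an analytic family of type~(B) and each $\lambda_k(\cdot)$ is real analytic, thus either constant or constant on no interval. To exclude constancy for the dispersive branches I would reprove the two-sided control: the bracketing argument of Lemma~\ref{l:asympt}, now adding a Dirichlet or Neumann condition on $\{-a\}\times(-d_2,d_1)$, gives $\lambda_k(p;a)\to\lambda_k$ as $|p|\to\infty$, since the oscillator center $-p/B$ runs off and the fibre sees the closed, decoupled configuration; and a variational estimate in the spirit of Lemma~\ref{l:sharp}, or equivalently the sharp Dirichlet monotonicity with the positive window capacity recalled in Remark~\ref{r:capacity}, gives $\lambda_k(p;a)<\lambda_k$ strictly whenever the limiting transverse profile fails to be $C^1$ across $z=0$. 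A branch attaining a value strictly below its asymptotic supremum cannot be constant, so by \cite[Thm.~XIII.86]{RS} it carries a.c. spectrum; the upper bound (the analogue of Corollary~\ref{c:bounds}) places its top at an eigenvalue \eqref{freeev2}, giving the band labelling and, at a matched pair, the a.c. band adjacent from below to the corresponding flat band.

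The remaining items follow the pattern already established. For~\eqref{prop22b} I would combine the $a$-monotonicity (analogue of Lemma~\ref{l:monoton}) with continuity at $a=0$ proved as in Theorem~\ref{thm:windowprop}\eqref{propd}, so every dispersive branch collapses to the point $\lambda_k$ and the a.c. bands vanish, leaving the flat bands \eqref{freeev2}. Claim~\eqref{prop22c} is the analytic-perturbation and scaling argument of Theorem~\ref{thm:windowprop}\eqref{propb}--\eqref{propd} applied fibrewise. For~\eqref{prop22d} I would repeat the monotone-minimum argument of Theorem~\ref{thm:windowprop}\eqref{prope}: the minimum over $p$ of a dispersive branch increases as $a$ decreases and tends to $\lambda_k$, so the gap below the band (or below the flat/a.c. pair) stays open for all small enough $a$. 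Finally, \eqref{prop22e} is immediate from the intertwining $U H(-p)U^{-1}=H(p)$ with $(U\psi)(x,z)=\psi(-x,z)$, the window geometry $|x|<a$ being reflection symmetric in $x$, exactly as in Theorems~\ref{thm:windowprop}\eqref{propf} and~\ref{thm:symmlayer}\eqref{prop2d}.

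The main obstacle I anticipate is the bookkeeping at the commensurate energies, where one degenerate decoupled eigenvalue splits into a flat branch and a dispersive one: one must verify that the explicit $f$-modes account for the \emph{entire} flat part and that the complementary branch is genuinely dispersive. The clean way around it is that the constructed modes are exact eigenfunctions of $H(p)$ pinned at the value \eqref{freeev2} for every $p$, so that value is a flat band; the strict inequality of the second paragraph then pushes the neighbouring branch strictly below it, keeping the two separated for all $p$. Making that strict inequality precise enough to rule out any accidental extra flat band — the unique-continuation content behind Remark~\ref{r:capacity} — is the one place where more than a routine transcription of Section~\ref{s:neumann} is needed.
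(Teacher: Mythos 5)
Your proposal is correct and follows essentially the same route as the paper: the flat bands are isolated by the same smooth odd continuation $\sin\sqrt{\nu}\,z$ across $z=0$ under the commensurability condition, the dispersive branches are shown to be real analytic and non-constant via the type-(B) analyticity, the $|p|\to\infty$ bracketing limit and the strict capacity/variational inequality, and items \eqref{prop22b}--\eqref{prop22e} are transcribed from Section~\ref{s:neumann}. The only cosmetic divergence is in \eqref{prop22d}, where you reuse the monotone-minimum argument of Theorem~\ref{thm:windowprop}\eqref{prope} while the paper squeezes $H(p)$ between the Neumann-window and decoupled operators and invokes Theorem~\ref{thm:windowprop}\eqref{propd}; both rest on the same monotonicity and small-window continuity.
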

 %----------------%
\begin{proof}
Each operator $H(p)$ has a discrete spectrum which follows from the minimax principle and the fact that it is bounded from below by the $H(p)$ with the barrier removed, $a=\infty$, which has eigenvalues $B(2n+1) + \big(\textstyle{\frac{\pi m}{d_1+d_2}}\big)^2$ of multiplicity at most two. In view of \eqref{dirint2a}, the spectrum of $H$ has a band structure.

If $\frac{m_1}{m_2}=\frac{d_1}{d_2}$, the eigenfunctions \eqref{freeef2} vanish at the the line $z=0$, and since they have there the same partial derivative with respect to $z$, being thus locally odd and smooth, they belong to $D(H(p))$ and give rise to eigenfunctions, even when the window is open, producing the flat bands. In the decoupled case, $a=0$, however, such an eigenvalue has multiplicity of at least two, and other linear combinations of the two parts are not smooth anymore, thus they are affected by the coupling and give rise to nonzero width bands. Needles to say, if $d_1$ and $d_2$ are incommensurate, the eigenfunctions \eqref{freeef2} do not vanish at the the dividing line and are affected by the coupling.

Mimicking then step by step the reasoning that lead to Theorem~\ref{thm:nwindow} one can check that for a given $k=k(n,m_1,m_2)$ the function $\lambda_k(\cdot)$ is real analytic with $\lambda_k(p)<\lim_{|p|\to\infty}\lambda_k(p)=\lambda_k$, and that for the other parameters fixed it is decreasing as function of $a$. This yields the absolute continuity of the non-flat bands. The continuity of $\lambda_k(p)$ with respect to the parameters $B\,\,a,\,d_1$, and $d_2$ which concludes the proof of claim~\eqref{prop22c} can be checked in a similar way as in Sec.~\ref{s:neumann}.

To prove claim~\eqref{prop22d}, we use bracketing estimating our operator $H(p)$ from below by the Hamiltonian of the system in which the window is replaced by Neumann condition. By minimax principle the eigenvalues of $H(p)$ are thus squeezed between those of such a Neumann modified Hamiltonian and the one without the coupling window, however, by Theorem~\ref{thm:nwindow}\eqref{propd} the two bounds converge to each other as $a\to 0$. The remaining claim \eqref{prop22e} is obvious from \eqref{Hamilt2a}.
\end{proof}
 %----------------%

%%%%%%%%%%%%%%%%%%%%%%%%%%%%%%%%%%%%%%%%%%%
\section{One-sided barrier}
\label{s:onesided}
\setcounter{equation}{0}

If the window width $2a$ is large the transport at one of its edges is practically independent of what is happening at the other one. To understand it better, it is useful to fix the position of the edge and to examine what happens if the other disappears, that is, to assume that one side of the barrier between the layers is removed as sketched in Fig.~3.
 %----------------%
\begin{figure}
\begin{tikzpicture}
\hspace{3cm}
\draw[line width=1.5pt, fill=gray!45] (1,2.5) -- (3,4.5) -- (8,4.5) -- (6,2.5) -- cycle;
\draw[line width=1.5pt, fill=gray!45] (1,1) -- (2.5,2.5) -- (5,2.5) -- (3.5,1) -- cycle;
\draw[line width=1.5pt, fill=gray!45] (1,0) -- (2,1) -- (3.5,1) -- (4.5,2) -- (5,2) -- (7,2) -- (8,2) -- (6,0) -- cycle;
\draw[line width=.5pt, ->] (4.3,3.3) -- (4.3,5) node[left] {$z$};
\draw[line width=.5pt] (4.3,-.5) -- (4.3,0);
%\draw[line width=.5pt] (4.3,.8) -- (4.3,1);
%\draw[line width=.5pt] (4.3,1.3) -- (4.3,2.5);
\draw[line width=.5pt] (4.3,.8) -- (4.3,2.5);
\draw[line width=.5pt, ->] (.8,1.8) -- (8.7,1.8) node[below right] {$x$};
\draw[line width=.5pt, ->] (7,4.5) -- (7.3,4.8) node[right] {$y$};
\draw[line width=.5pt] (2,-.5) -- (5,2.5);
\draw[line width=2pt, ->] (.5,2.5) -- (.5,4) node[below right] {${\vec B}$};
\draw (1.8,1.3) node{D};
\draw (1.8,.3) node{D};
\draw (1.8,2.8) node{D};
\end{tikzpicture}
\caption{Straight interface between split and non-split magnetic layer}
\end{figure}
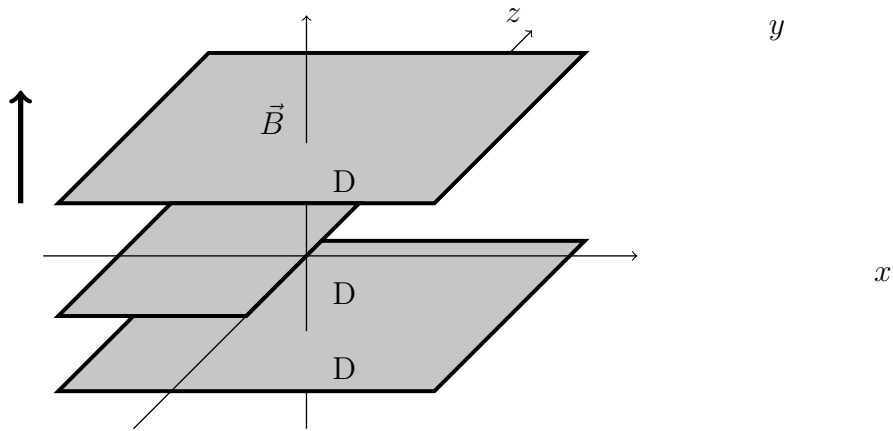
 %----------------%

The double layer is now of the form, $\Omega:=\Sigma\times\mathbb{R}$, with the cross-section $\Sigma:=\mathbb{R}\times(-d_2,d_1) \setminus \{(x,0):\, x\ge 0\}$; the Dirichlet condition is imposed at the `outer' boundary, $z=-d_2, d_1$, and at the left part of the halfplane $z=0$. The magnetic field is of the same form as before corresponding to the vector potential, $\vec A=(0,Bx,0)$. The Hamiltonian acts on $L^2(\Omega)$ as
 %----------------%
\begin{subequations}
\label{Hamilt3}
\begin{equation} \label{Hamilt3a}
H = -\partial_x^2 + (-i\partial_y+Bx)^2 -\partial_z^2
\end{equation}
 %----------------%
with the domain
 %----------------%
\begin{equation} \label{Hamilt3b}
D(H) = \{\psi\in H^2(\Omega) \cap H^1_0(\Omega):\: H\psi\in L^2(\Omega)\},
\end{equation}
\end{subequations}
 %----------------%
and by partial Fourier transformation, \eqref{Hamilt3} is unitarily equivalent to the direct integral
 %----------------%
\begin{subequations}
\label{dirint3}
\begin{equation} \label{dirint3a}
H = \int^\oplus_\mathbb{R} H(p)\,\mathrm{d}p,
\end{equation}
 %----------------%
where the fiber operators act on $L^2(\Sigma)$ as
 %----------------%
\begin{equation} \label{dirint3b}
H(p) = -\partial_x^2 + (p+Bx)^2 -\partial_z^2
\end{equation}
 %----------------%
with the domain independent of $p$, namely
 %----------------%
\begin{equation}
D(H(p)) = \{\psi\in H^2(\Sigma) \cap H^1_0(\Sigma):\: H(p)\psi\in L^2(\Sigma)\}. \label{dirint3c}
\end{equation}
\end{subequations}
 %----------------%
Its spectrum is again purely discrete, the difference to the previous sections is that now we do not have a parameter controlling the perturbation allowing us, in particular, to switch it off. Nevertheless, a significant part of the previous reasoning can be adapted to the present situation.
 %----------------%
\begin{theorem} \label{thm:onesided}
The spectrum of \eqref{Hamilt3} consists of bands, in general overlapping, with the following properties:
 % ------------- %
 \begin{enumerate}[(i)]
 \setlength{\itemsep}{0pt}
\item The spectrum is absolutely continuous if $d_1$ and $d_2$ are incommensurate. In the opposite case there is an infinite number of flat bands at the values \eqref{freeev2} with $(m_1,m_2)$ satisfying $\frac{m_1}{m_2}=\frac{d_1}{d_2}$; to each of them there is an absolutely continuous band adjacent to it from below. \label{prop3a}
\item The lower edges of the absolutely continuous are of the form
 %----------------%
$$ %\begin{equation} \label{loweredge}
\lambda_{n,m}^\mathrm{free} = B(2n+1) + \big(\textstyle{\frac{\pi m}{d_1+d_2}}\big)^2,\quad n\in\mathbb{N}_0,\; m\in\mathbb{N},
$$ %\end{equation}
 %----------------%
in particular, $\inf\sigma(H)= B+\big(\textstyle{\frac{\pi}{d_1+d_2}}\big)^2$ and the spectrum in the vicinity of the threshold is absolutely continuous. \label{prop3b}
\item If the layer widths $d_1,\,d_2$ are unequal and $B\ge 3\big(\textstyle{\frac{\pi}{d_1+d_2}}\big)^2$, the spectrum contains an open gap. \label{prop3c}
 \end{enumerate}
 % ------------- %
\end{theorem}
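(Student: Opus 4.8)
The plan is to follow the pattern of Sections~\ref{s:neumann} and~\ref{s:latcoup}, adapting it to the fact that the coupling window now fills the whole halfplane $x<0$. As before each fiber operator $H(p)$ has compact resolvent, since the oscillator potential $(p+Bx)^2$ confines the motion in $x$ while $z$ runs through the bounded interval $(-d_2,d_1)$, so its spectrum is purely discrete and, by~\eqref{dirint3a}, $\sigma(H)$ has a band structure traced out by the branches $\lambda_k(p)$. My first step is to establish monotonicity of these branches in $p$: shifting $x\mapsto x-\frac{p}{B}$ as in the proof of Lemma~\ref{l:monoton} recentres the oscillator at the origin and places the barrier in $\{x'>\frac{p}{B}\}$, so increasing $p$ enlarges the coupled region and the Dirichlet monotonicity shows each $\lambda_k(\cdot)$ is non-increasing. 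Consequently $\inf_p\lambda_k(p)=\lim_{p\to+\infty}\lambda_k(p)$ and $\sup_p\lambda_k(p)=\lim_{p\to-\infty}\lambda_k(p)$, and combined with the real analyticity established below, each band is the closed interval between these two limits.

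Next I identify the two limits by bracketing exactly as in Lemma~\ref{l:asympt}. For $p\to+\infty$ the oscillator is centred deep in the region $x<0$ where the two layers merge into a single slab of width $d_1+d_2$, giving $\lambda_k(p)\to B(2n+1)+\big(\frac{\pi m}{d_1+d_2}\big)^2=:\lambda^{\mathrm{free}}_{n,m}$; for $p\to-\infty$ it is centred in the barrier region, where the cross-section splits into two decoupled Dirichlet strips, giving the split values $B(2n+1)+\big(\frac{\pi m}{d_j}\big)^2$. Hence the lower band edges are precisely the $\lambda^{\mathrm{free}}_{n,m}$ of claim~\eqref{prop3b}, the infimum being attained at $n=0,m=1$; since $\big(\frac{\pi}{d_1+d_2}\big)^2$ is strictly below the lowest split value, the lowest branch is non-constant and the spectrum is absolutely continuous near $\inf\sigma(H)=B+\big(\frac{\pi}{d_1+d_2}\big)^2$. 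For claim~\eqref{prop3a} I reuse the Kato type-(B) analyticity argument of Theorem~\ref{thm:nwindow} to conclude that each $\lambda_k(\cdot)$ is real analytic, so a branch produces an absolutely continuous band unless it is constant, and by the monotonicity this happens exactly when its free and split limits coincide. In the incommensurate case no such coincidence occurs; in the commensurate case, when $\frac{m_1}{m_2}=\frac{d_1}{d_2}$, the function $h_n\big(x+\frac{p}{B}\big)\sin\pi\kappa z$ with $\kappa=\frac{m_1}{d_1}=\frac{m_2}{d_2}$ vanishes at $z=0,-d_2,d_1$ and is smooth across the slit, hence lies in $D(H(p))$ for every $p$ and yields a flat band, precisely as in Theorem~\ref{thm:asymmlayer}; the degeneracy partner of each such mode is deformed by the coupling into a non-constant branch whose upper edge is the flat-band value, giving the adjacent absolutely continuous band from below.

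For the gap in claim~\eqref{prop3c} I argue arithmetically from the band picture just obtained. Assuming without loss of generality $d_1>d_2$ and writing $\mu:=\big(\frac{\pi}{d_1+d_2}\big)^2$, the lowest branch sweeps the interval $\big[B+\mu,\,B+(\frac{\pi}{d_1})^2\big]$, from the lowest free to the lowest split value, while every other branch has its minimum at a higher free value. The second lowest free value equals $B+4\mu$ provided the first excited Landau contribution $3B+\mu$ does not fall below it, which is guaranteed by $B\ge 3\big(\frac{\pi}{d_1+d_2}\big)^2$; the lowest flat band, sitting at $B+\big(\frac{(m_1+m_2)\pi}{d_1+d_2}\big)^2$ with $m_1+m_2\ge 3$, lies at or above $B+9\mu$ and so also above $B+4\mu$. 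Since $d_1>d_2$ forces $\big(\frac{\pi}{d_1}\big)^2<4\mu$, the open interval $\big(B+(\frac{\pi}{d_1})^2,\,B+4\mu\big)$ is met by no band, which is the claimed gap.

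The main obstacle I expect is the bracketing step pinning down the two asymptotic limits. In Lemma~\ref{l:asympt} the window was a finite strip, so for \emph{both} signs of large $p$ the oscillator sat in the Dirichlet region and a single comparison sufficed; here the half-infinite window produces genuinely different limiting one-dimensional problems, the merged slab of width $d_1+d_2$ as $p\to+\infty$ and the two separate strips as $p\to-\infty$. One must therefore insert the Dirichlet/Neumann cut at a suitably moving segment $\{x_0\}\times(-d_2,d_1)$ and verify that the spectral threshold of the discarded part recedes to infinity in each regime. Everything downstream — the identification of the lower edges, the non-constancy of the branches, and the arithmetic locating the gap — rests on getting these two limits right.
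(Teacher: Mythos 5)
Your proposal follows essentially the same route as the paper's proof: direct-integral decomposition with discrete fibers, monotonicity of the branches $\lambda_k(\cdot)$ in $p$ obtained by recentring the oscillator and invoking Dirichlet monotonicity of the slit, Neumann/Dirichlet bracketing with a moving cut to identify the two asymptotic limits (merged slab of width $d_1+d_2$ at one infinity, two decoupled strips at the other), Kato type-(B) analyticity for absolute continuity of the non-flat branches, the explicit eigenfunctions $h_n\sin(\pi\kappa z)$ for the flat bands in the commensurate case, and the arithmetic comparison of the two lowest band edges under $B\ge 3\big(\frac{\pi}{d_1+d_2}\big)^2$ for the gap. Your orientation is mirrored relative to the paper's (you place the barrier at $x\ge 0$ as the formula for $\Sigma$ states, so $\lambda_k$ is decreasing with the free limit at $p\to+\infty$; the paper, following its figure, gets an increasing function with the free limit at $p\to-\infty$) -- this is immaterial, the two conventions being unitarily equivalent under $x\mapsto -x$, $p\mapsto -p$. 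Two of your details are in fact sharper than the text: the upper edge of the first band involves $\max(d_1,d_2)$, not $\min$, and you verify explicitly that the lowest flat band, at or above $B+9\big(\frac{\pi}{d_1+d_2}\big)^2$, cannot intrude into the candidate gap.

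One step in claim~\eqref{prop3a} is under-justified. You characterize flat branches as those whose free and split limits coincide (correct, given monotonicity) and then assert that in the incommensurate case ``no such coincidence occurs.'' A coincidence between a free value $B(2n'+1)+(m')^2\big(\frac{\pi}{d_1+d_2}\big)^2$ and a split value $B(2n+1)+\big(\frac{\pi m}{d_j}\big)^2$ with $n'=n$ does force $d_1/d_2\in\mathbb{Q}$, but for $n'\ne n$ such numerical coincidences can occur for special values of $B$ even with incommensurate widths, and you have not excluded that one of them pairs the $k$-th free with the $k$-th split value for some $k$. The clean way to close this -- and what the paper does -- is variational rather than arithmetic: for incommensurate widths every split eigenfunction fails to vanish identically on the line $z=0$, so the trial-function argument of Lemma~\ref{l:sharp} yields the strict inequality $\lambda_k(p)<\lambda_k^{\mathrm{split}}$ for every finite $p$, which together with $\lambda_k(p)\to\lambda_k^{\mathrm{split}}$ excludes constancy of the branch. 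Everything else in your argument stands.
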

 %----------------%
\begin{proof}
The band character of the spectrum follows from the fact that the spectrum of each $H(p)$ is purely discrete similarly as in the proof of Theorem~\ref{thm:asymmlayer}. If $d_1$ and $d_2$ are rationally related, the argument used to prove claim \eqref{prop22a} there shows the existence of the flat bands in \eqref{prop3a} and of another eigenvalue $\lambda_k(p)$ of $H(p)$ below them for each $p\in\R$ and $k=k(n,m_1,m_2)$. Repeating the reasoning from the proof of Theorem~\ref{thm:nwindow} we infer that the functions $\lambda_k(\cdot)$ are real analytic, hence the non-flat bands are absolutely continuous.

Furthermore, by a simple change of variable operator \eqref{dirint3b} is unitarily equivalent to $-\partial_u^2 + B^2u^2 -\partial_z^2$ with barrier at the halfline $\big(-\infty,\frac{p}{B}\big)$; using the argument from Remark~\ref{r:capacity} we conclude that each $\lambda_k(\cdot)$ is an increasing function. As in Lemma~\ref{l:asympt} we have $\lim_{p\to\infty} \lambda_k(p)=\lambda_k$ given by \eqref{freeev2} and similarly we get
 %----------------%
\begin{equation} \label{lowerlim}
\lim_{p\to-\infty} \lambda_k(p) = \lambda_{n',m'}^\mathrm{free}
\end{equation}
 %----------------%
for some $(n',m')$ such that that $\lambda_{n',m'}^\mathrm{free}<\lambda_k$; the map $k\mapsto (n',m')$ is injective with the possible exception of the situation when $B$ and $\big(\textstyle{\frac{\pi}{d_1+d_2}}\big)^2$ are rationally related and the spectrum of the double layer without the barrier is not simple.

This completes the proof of claim~\eqref{prop3b} because the lower edge of the first band corresponds to $n=0$ and $m=1$ and the nearest possible flat band, if it exists, must be above $B+\big(\textstyle{\frac{\pi}{\min(d_1,d_2)}}\big)^2$ since the index combination $m_1=m_2=1$ is excluded if $d_1\ne d_2$. While most bands touch or overlap, open gaps still may exist. Consider the values \eqref{lowerlim} with $n=0$. Under the condition in~\eqref{prop3c} the second lowest one corresponding to $m=2$ does not exceed any value referring to $n\ge 1$ and at the same time, the upper edge of the first band is at
 %----------------%
$$ %\begin{equation} \label{upperfirst}
\lambda_2 = \lambda_{0,1,1} = B+\Big(\frac{\pi}{\min(d_1,d_2)}\Big)^2 < B+\Big(\frac{2\pi}{d_1+d_2}\Big)^2,
$$ %\end{equation}
 %----------------%
where the last inequality holds whenever $d_1\ne d_2$; this proves claim~\eqref{prop3c}.
\end{proof}
 %----------------%

The preferred direction character of the magnetic transport is again visible. The eigenfunctions of $H(p)$ with $p>0$ giving rise to the generalized eigenfunctions of $H$ describing transport in the direction of the $y$ axis are dominantly supported in the divided part of the double layer, the more the larger $p$ is, while the transport in the opposite direction is more pronounced in the undivided part. This is a rough description, however, which does not take into account effects in the transport associated with higher Landau levels coming from the shape of the oscillator eigenfunctions~\ref{oscill}.

%%%%%%%%%%%%%%%%%%%%%%%%%%%%%%%%%%%%%%%%%%%
\section{Concluding remarks}
\label{s:concl}
\setcounter{equation}{0}

The above discussion by far does not exhaust all questions one may ask about properties of these systems. We have touched already some; let us mention other directions in which analysis of the model could be extended:
 % ------------- %
\begin{enumerate}[(a)]
\setlength{\itemsep}{0pt}
 % ------------- %
\item \emph{Weak coupling:} We know that the bands shrink to points as $a\to 0$, asymptotic expansion of the functions $\lambda_k(\cdot)$ in the vicinity of this point would allow us to understand better the dependence of the band widths on the parameters and make conclusions about the number of open gaps in the spectrum.
 % ------------- %
\item \emph{Properties of the \emph{ac} spectrum:} The tools used here do not allow to see the finer structure of the dispersion functions, in particular, the character of their crossings, their derivatives, etc. \label{qa}
 % ------------- %
\item \emph{Other lateral couplings:} One can ask about effects caused by windows of different shapes. As in the present case, the analysis simplifies if we can reformulate it as a problem of reduced dimension, say, for parallel strip windows, an annular form, etc. There are other interesting cases, though, like non-straight strip windows or crossed strips which require a different technique.
 % ------------- %
\item \emph{Other geometric perturbations:} In \cite{EKT18} we have shown that a magnetic transport can occur in a single Dirichlet layer if it is bent appropriately keeping the translational invariance in one dimension. In the same vein one can ask, e.g., what happens in layers, single or laterally coupled, if their width is locally modified.
 % ------------- %
\item \emph{Impurity effects:} A big question in transport problems generally, and magnetic ones in particular, is the stability with respect to random perturbations, see for instance \cite{DP99, FGW00, CHS02, HS08a, HS08b}. Here too one expects that the disorder will give rise to Anderson localization, but a part of the absolutely continuous may survive if the added potential responsible for it will be sufficiently weak. We note that this question is closely connected with \eqref{qa} above.
 % ------------- %
\end{enumerate}
 % ------------- %

%%%%%%%%%%%%%%%%%%%%%%%%%%%%
\subsection*{Acknowledgment}

The research was supported by the Czech Science Foundation within the project 21-07129S and by the EU project CZ.02.1.01/0.0/0.0/16\textunderscore 019/0000778.

%%%%%%%%%%%%%%%%%%%%%%%%%%%%
\subsection*{ORCID iDs}

Pavel Exner  https://orcid.org/0000-0003-3704-7841

%%%%%%%%%%%%%%%%%%%%%%%%
\subsection*{References}

\end{document}